\documentclass[10pt]{amsart}

\usepackage{amsmath, amsfonts, amsthm, amssymb, graphicx, fullpage, enumerate, float, caption, subcaption, tikz, hyperref, scalerel, boldline, makecell, longtable, array}
\usetikzlibrary{graphs}  
   
\newtheorem{theorem}{Theorem}[section]        
\newtheorem{lemma}[theorem]{Lemma}
\newtheorem{corollary}[theorem]{Corollary}

\theoremstyle{remark}

\theoremstyle{definition}

\def\N{\mathbb{N}}
     
\def\Q{\mathbb{Q}}            
\def\R{\mathbb{R}}      
\def\Z{\mathbb{Z}}



\def\implies{\Longrightarrow}  
\def\iff{\Longleftrightarrow}

\begin{document}
\title{The sum-product problem for small sets} 
\author{Ginny Ray Clevenger,  \quad  Haley Havard, \quad  Patch Heard, \\  Andrew Lott, \quad   Alex Rice, \quad  Brittany Wilson}
 
\begin{abstract} For $A\subseteq \R$, let $A+A=\{a+b: a,b\in A\}$ and $AA=\{ab: a,b\in A\}$. For $k\in \N$, let $SP(k)$ denote the minimum value of $\max\{|A+A|, |AA|\}$ over all $A\subseteq \N$ with $|A|=k$. Here we establish $SP(k)=3k-3$ for $2\leq k \leq 7$, the $k=7$ case achieved for example by $\{1,2,3,4,6,8,12\}$, while $SP(k)=3k-2$ for $k=8,9$, the $k=9$ case achieved for example by $\{1,2,3,4,6,8,9,12,16\}$. For  $4\leq k \leq 7$, we provide two proofs using different applications of Freiman's $3k-4$ theorem; one of the proofs includes extensive case analysis on the product sets of $k$-element subsets of $(2k-3)$-term arithmetic progressions. For $k=8,9$, we apply Freiman's $3k-3$ theorem for product sets, and investigate the sumset of the union of two geometric progressions with the same common ratio $r>1$, with separate treatments of the overlapping cases $r\neq 2$ and $r\geq 2$.   
 
\end{abstract}
 
\address{Department of Mathematics, Millsaps College, Jackson, MS 39210}
\email{clevevr@millsaps.edu} 
\email{havarhe@millsaps.edu} 
\email{heardkp@millsaps.edu}
\email{andrew.lott@uga.edu}   
\email{riceaj@millsaps.edu} 
\email{wilsobn@millsaps.edu}

\maketitle  
\setlength{\parskip}{5pt}   
 
\section{Introduction}

For $A\subseteq \R$, we define the \textit{sumset} $A+A=\{a+b: a,b\in A\}$ and the \textit{product set} $AA=\{ab: a,b\in A\}$. It is a standard fact, and a pleasant exercise to verify, that if $A\subseteq \R$ and $|A|=k$, then \begin{equation}\label{sp} 2k-1 \leq |A+A| \leq \frac{k^2+k}{2},\end{equation} with equality holding on the left-hand side if and only if $A$ is an \textit{arithmetic progression}, a set of the form $\{x,x+d,\dots,x+(k-1)d\}$ with $x,d\in \R$ and $d>0$. Here and throughout the paper we use $|X|$ to denote the number of elements of a finite set $X$. The right-hand side of \eqref{sp} is precisely the number of pairs $(a,b)\in A\times A$ with $a\leq b$, so equality holds on the right-hand side if and only if $A$ has no repeated sums other than the ones guaranteed by commutativity. Such a set is known as a \textit{Sidon set}. 

By viewing multiplication as addition of exponents, we see that the same inequalities \eqref{sp} hold for $|AA|$, provided $A\subseteq (0,\infty)$. This time, equality holds on the left-hand side if and only if $A$ is a \textit{geometric progression}, a set of the form $\{x,rx,\dots,r^{k-1}x\}$ with $x>0$ and $r>1$. Since it is impossible to be both an arithmetic and geometric progression when $k\geq 3$ (essentially the arithmetic mean-geometric mean inequality), at least one of $|A+A|$ and $|AA|$ must exceed the minimum value of $2k-1$. But by how much?

In this direction, for $k\in \N$, we define $$SP(k)=\min_{\substack{A\subseteq \N \\ |A|=k}}\left(\max\{|A+A|,|AA|\}\right).$$ Since the question's introduction by Erd\H{o}s and Szemer\'edi \cite{ES} in 1983, an extensive literature has developed on the asymptotic behavior of $SP(k)$ as $k\to \infty$, referred to as the \textit{sum-product problem}. Erd\H{o}s and Szemer\'edi themselves showed $SP(k)\geq ck^{1+\delta}$ for constants $c,\delta>0$, at the same time conjecturing $SP(k)=k^{2-o(1)}$. Roughly and asymptotically speaking, the conjecture says one cannot do much better than $A=\{1,2,\dots,k\}$, which has $|A+A|=2k-1$ and $|AA|=k^2/(\log k)^{\delta+o(1)}$, where $\delta =1-(1+\log\log 2)/\log 2 \approx .086$ \footnote{We use $o(1)$ to denote a function tending to $0$ as $k\to \infty$, and we use $\log$ to denote the natural logarithm.}. The task of estimating $|AA|$ in this case is known as the \textit{Erd\H{o}s multiplication table problem}, for which the interested reader can refer to \cite{erdosMT} and \cite{Ford2}. Over the ensuing four decades, incremental progress has been made toward the Erd\H{o}s-Szemer\'edi conjecture, the best results (so far) coming through connections with incidence geometry (see, in chronological order, \cite{Nath}, \cite{Elekes}, \cite{Ford}, \cite{Soly}, \cite{KS2}, \cite{KS1}, \cite{Shak}, and \cite{RudSS}). The current best lower bound is due to Rudnev and Stevens \cite{RudStev}, who showed $SP(k)\geq k^{\frac{4}{3}+\frac{2}{1167}-o(1)}$.

In contrast with previous literature on the sum-product problem, we eschew growth estimates for $SP(k)$ as $k\to \infty$, opting instead for the more modest and elementary goal of precisely determining $SP(k)$ for small values of $k$. Our main results are summarized as follows.  

\begin{theorem}\label{mainSP} We have the following exact values for $SP(k)$: $$SP(k)=\begin{cases} 3k-3, & 2\leq k\leq 7 \\ 3k-2, & k=8,9 \end{cases}. $$
\end{theorem}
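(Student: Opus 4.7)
The plan is to prove matching upper and lower bounds for each $k$. The upper bounds are constructive: I would verify by direct enumeration that $\{1,2,3,4,6,8,12\}$ achieves $\max\{|A+A|,|AA|\} = 18 = 3(7)-3$ and that $\{1,2,3,4,6,8,9,12,16\}$ achieves $\max\{|A+A|,|AA|\} = 25 = 3(9)-2$, then obtain each intermediate $k$ by choosing an appropriate $k$-element subset (e.g., suitable prefixes or restrictions of these witnesses). The remaining (and harder) task is the matching lower bound.

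The cases $k=2, 3$ are handled immediately: $k=2$ by direct inspection since $|A+A|=|AA|=3$, and $k=3$ because a three-element set cannot simultaneously be arithmetic and geometric (essentially AM--GM), forcing at least one of $|A+A|, |AA|$ to exceed $2k-1 = 5$.

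For $4 \le k \le 7$, I would argue by contradiction, supposing $\max\{|A+A|,|AA|\} \le 3k-4$. Applying Freiman's $3k-4$ theorem additively forces $A$ to lie inside an arithmetic progression of length at most $(3k-4)-k+1 = 2k-3$; applying the same theorem multiplicatively (i.e., to $\log A$) forces $A$ to lie inside a geometric progression of length at most $2k-3$. After normalizing by translation and dilation, only finitely many $k$-element subsets of a $(2k-3)$-term AP remain, and for each one the task is to show $|AA| \ge 3k-3$, contradicting the assumption. The main technical burden is the case analysis for $k=7$, where one enumerates $7$-element subsets of an $11$-term AP and bounds each product set from below; this enumeration is the chief obstacle. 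As a lighter alternative one can apply Freiman only once (to whichever of $|A+A|, |AA|$ is smaller) and exploit the resulting structural rigidity of one side against the other.

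For $k = 8, 9$, the target strengthens to $3k-2$, so one must rule out $|A+A|, |AA| \le 3k-3$; the $3k-4$ theorem alone is too weak. Instead I would invoke Freiman's $3k-3$ theorem multiplicatively, which yields that $A$ is either contained in a geometric progression of length at most $2k-1$, or contained in the union of two geometric progressions with a common ratio $r > 1$. The short-GP case I would again reduce by enumeration. The union-of-two-GPs case is the heart of the argument: write $A \subseteq B \cup C$ where $B, C$ are GPs with common ratio $r$, and analyze $|A+A|$ directly, splitting into the overlapping ranges $r \ne 2$ and $r \ge 2$. These regimes behave differently because powers of $2$ admit additive coincidences like $2^a + 2^b = 2^c + 2^d$ with $\{a,b\}\ne\{c,d\}$ that reduce the effective sumset size and are unavailable for other $r$. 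I expect the hardest step to be the $r = 2$ case, where establishing $|A+A| \ge 3k-2$ requires carefully tracking how elements of $A$ are distributed across the two geometric progressions and how many additive collisions this distribution can possibly produce.
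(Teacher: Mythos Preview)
Your overall architecture matches the paper's: explicit witnesses for the upper bounds, Freiman's $3k-4$ theorem for $4\le k\le 7$, Freiman's $3k-3$ theorem on the product side for $k=8,9$, and the same $r\ne 2$ versus $r\ge 2$ split in the two-GP case. Two of your reductions, however, do not work as stated.

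For $4\le k\le 7$ you propose to ``normalize by translation and dilation'' and then check $|AA|\ge 3k-3$ for each $k$-subset of a fixed $(2k-3)$-term AP. But translation does not preserve $|AA|$: the sets $\{0,1,2,3\}$, $\{1,2,3,4\}$, $\{2,3,4,5\}$ have product sets of sizes $7$, $9$, $10$ respectively. After fixing $\tilde A\subseteq\{0,\dots,2k-4\}$ you still face the one-parameter family $A=\{x+ad:a\in\tilde A\}$ and need $|AA|\ge 3k-3$ uniformly in $x,d>0$. The paper's device is to exhibit, for each $\tilde A$, a chain of length $3k-3$ in $\{(a+b,ab):a,b\in\tilde A\}$ under the coordinatewise partial order; any such chain maps to a strictly increasing sequence in $AA$ regardless of $x,d$. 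You actually mention the cleaner route yourself---applying Freiman to $\log A$---but do not follow it through: once $A$ lies in a rational-ratio GP, a short rational-root-theorem argument shows $A$ is a Sidon set, whence $|A+A|=\binom{k+1}{2}\ge 3k-3$ with no enumeration at all. This same Sidon fact is what disposes of the single-GP branch at $k=8,9$; you cannot ``reduce by enumeration'' there either, since the ratio $r$ is a free parameter.

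Finally, your diagnosis of why $r=2$ is special is backwards: there are \emph{no} nontrivial solutions to $2^a+2^b=2^c+2^d$---that is precisely the Sidon property of a single GP. What makes $r=2$ delicate is cross-coincidences between the two GPs $B$ and $C$, e.g.\ $1+2=3$, $2+4=6$ when $B=\{1,2,4,\dots\}$ and $C=\{3,6,12,\dots\}$. For rational $r\ne 2$ the paper shows (via repeated rational-root arguments) that each type of such coincidence lies in a single geometric family and is therefore tightly controlled, while the $r\ge 2$ range is handled by a growth argument exploiting $a_m\ge 2a_{m-2}$.
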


We note that, in addition to the trivial fact $SP(1)=1$, the $k=2,3$ cases of Theorem \ref{mainSP} are immediate, as $|A+A|=|AA|=3$ whenever $|A|=2$, and $\max\{|A+A|,|AA|\}=6$ whenever $|A|=3$, the latter by the aforementioned arithmetic mean-geometric mean inequality. Further, since $SP(k)$ is defined as a minimum, we can establish the required upper bounds for Theorem \ref{mainSP} by exhibiting a single example for each value of $k$, as is done in Table \ref{UBtable} below. We encourage the reader to verify the entries in the third and fourth columns, and we do not claim these examples to be unique, even up to scaling.   
 
\begin{table}[H]

\centering

\caption{Examples showing $SP(k)\leq 3k-3$ for $4\leq k \leq 7$ and  $SP(k)\leq 3k-2$ for $k=8,9$.}
		\label {UBtable}


\begin{tabular}{||c||c||c||c||}

\hline

$k$ & $A$ & $|A+A|$ & $|AA|$\\

\hline\hline

4 & \{1, 2, 3, 4\} & 7 & 9 \\

\hline

5 & \{1, 2, 3, 4, 6\} & 10 & 12 \\

\hline

6 & \{1, 2, 3, 4, 6, 8\} & 13 & 15 \\

\hline

7 & \{1, 2, 3, 4, 6, 8, 12\} & 18 & 18 \\

\hline

8 & \{1, 2, 3, 4, 6, 8, 9, 12\} & 20 & 22 \\

\hline

9 & \{1, 2, 3, 4, 6, 8, 9, 12, 16\} & 25 & 25 \\

\hline

\end{tabular}

\end{table}

We dedicate the remainder of the paper to establishing the required lower bounds on $SP(k)$ for $4\leq k \leq 9$. Our general approach is as follows: if one of $|A+A|$ or $|AA|$ is \textit{very} small (close to its minimum value), then known structural characterizations should allow us to conclude that the other is large (close to its maximum value). This core idea is not new, as seen in the \textit{few sums, many products problem}, dating to work of Elekes and Ruzsa \cite{ER}, and the \textit{few products, many sums problem}, for which the interested reader should refer to \cite{MRSS}.

  In Section \ref{3k4LB}, we provide two separate proofs for $4\leq k \leq 7$, each using a different application of Freiman's characterization of $k$-element sets with sumset size at most $3k-4$. The first proof, Section \ref{47e}, is the more elementary, and includes case analysis on the product sets of $k$-element subsets of $(2k-3)$-term arithmetic progressions, which may be of independent interest. The second proof, Section \ref{FIS}, is less labor intensive, but requires a few more mature tools, as well as the fact that a geometric progression of positive integers is a Sidon set. We provide a proof of this latter fact using the rational root theorem, the application of which inspired much of our work in Section \ref{3k3LB}. In that section, we tackle the trickier cases $k=8,9$ using Frieman's characterization of $k$-element sets with sumset size at most $3k-3$, and analysis of the sumsets of unions of two geometric progressions with the same common ratio $r>1$, with separate treatments for the overlapping cases $r\neq 2$ and $r\geq 2$. Most of these latter results are generalized to include geometric progressions with negative elements, and some may also be of independent interest. 

\section{Two proofs of $SP(k)\geq 3k-3$ for $4\leq k\leq 7$} \label{3k4LB} 

Establishing the inequality $SP(k)\geq 3k-3$ is equivalent to ruling out the existence of $A\subseteq \N$ with $|A|=k$ and $|A+A|,|AA|\leq 3k-4$. Fortunately, the right-hand side of the latter inequality provides access to the following precise characterization of Freiman \cite{Frei59}.

\begin{theorem}[Freiman's $3k-4$ theorem] \label{fo} If $A\subseteq \Z$ with $|A|=k$ and $|A+A|=2k-1+b\leq 3k-4$, then $A$ is contained in an arithmetic progression of length $k+b$.
\end{theorem}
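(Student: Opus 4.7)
The plan is to reduce to a normalized form and then invoke Freiman's lemma, the classical sumset lower bound that lies at the heart of this theorem. First, I would observe that both the hypothesis $|A+A| = 2k - 1 + b$ and the conclusion (containment in an AP of length $k+b$) are invariant under affine maps $x \mapsto (x - t)/d$ applied to $A$. So I can translate to arrange $\min A = 0$ and then divide out by $d = \gcd(A)$ to arrange $\gcd(A) = 1$. Writing $A = \{0 = a_1 < a_2 < \cdots < a_k = n\}$ after this normalization, any AP containing $A$ must have common difference dividing $\gcd(A) = 1$, so the shortest AP containing $A$ is $\{0, 1, \ldots, n\}$ of length $n + 1$. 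The conclusion therefore reduces to the numerical inequality $n \leq k - 1 + b$, i.e., $n + k \leq |A + A|$.

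With the normalization in place, I would invoke \emph{Freiman's lemma}: for any finite set of integers $A$ with $\min A = 0$, $\max A = n$, $|A| = k$, and $\gcd(A) = 1$, one has
$$|A + A| \geq \min(n + k,\, 3k - 3).$$
Given this, the hypothesis $|A + A| \leq 3k - 4 < 3k - 3$ forces the minimum to be attained by $n + k$, yielding $n + k \leq 2k - 1 + b$ and hence $n \leq k - 1 + b$, exactly as required. Undoing the affine normalization transports this conclusion back to the original $A \subseteq \Z$.

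The substantive work is the proof of Freiman's lemma itself. The trivial bound $|A + A| \geq 2k - 1$ comes from the chain $0 = 2a_1 < a_1 + a_2 < \cdots < a_1 + a_k < a_2 + a_k < \cdots < 2a_k$. The task is then to exhibit either $n - k + 1$ additional sums (to reach $n + k$) or $k - 2$ additional sums (to reach $3k - 3$). The natural route is induction on $k$: delete $a_k$, apply the inductive bound to $A' = A \setminus \{a_k\}$ (after possibly re-normalizing by $\gcd(A')$), and then account for the new sums of the form $a_k + a_i$ that fall outside $A' + A'$. The main obstacle is the bookkeeping when $\gcd(A') > 1$ or when the successive gaps $d_i = a_{i+1} - a_i$ are highly uneven, since one must use the coprimality $\gcd(A) = 1$ carefully to guarantee that enough sums $a_i + a_j$ avoid the chain. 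This case analysis, tracking how the extremal structure of a small-doubling set interacts with the gcd condition, is the technical heart of the argument and the step requiring the most care.
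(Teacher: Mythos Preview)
The paper does not actually prove Theorem~\ref{fo}; it states the result and remarks that Freiman's original proof ``is an induction on $k$ requiring only clever counting, basic modular arithmetic, and patient case analysis,'' referring the reader to \cite{Frei59} and pages 12--13 of \cite{Frei73}. Your outline---normalize so that $\min A=0$ and $\gcd(A)=1$, reduce the conclusion to the inequality $n+k\leq |A+A|$, and then establish the key lemma $|A+A|\geq \min(n+k,\,3k-3)$ by induction on $k$---is precisely the classical route and matches the paper's description of Freiman's argument.

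Your reduction is correct and cleanly stated: the affine invariance, the observation that under $\gcd(A)=1$ the shortest containing AP has length $n+1$, and the deduction from the hypothesis $|A+A|\leq 3k-4<3k-3$ are all sound. You are also right that the real content lies in the inductive proof of the lemma, and that the delicate point is handling the case where deleting $a_k$ changes the gcd (this is where the ``basic modular arithmetic'' the paper mentions enters). Since the paper offers no more detail than this summary, your proposal is at least as complete as what appears there; to turn it into a self-contained proof you would need to carry out the case analysis you flag at the end, but as a plan it is correct and aligned with the cited argument.
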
 

\noindent While the original paper \cite{Frei59} is in Russian, an English translation of Freiman's proof of Theorem \ref{fo}, which is an induction on $k$ requiring only clever counting, basic modular arithmetic, and patient case analysis, can be found on pages 12-13 of \cite{Frei73}. The argument does rely on the fact that the sets in question lie in $\Z$, which we discuss further in Section \ref{FIS}. 

\subsection{Theorem \ref{fo} applied on the sum side} \label{47e}   One approach to our goal for $4\leq k \leq 7$ is to assume $A\subseteq \N$ with $|A|=k$ and $|A+A|\leq 3k-4$, then try to show that $|AA|$ is necessarily at least $3k-3$. To this end, we apply Theorem \ref{fo} as written, with $b=k-3$, to conclude that $A$ is contained in a $(2k-3)$-term arithmetic progression. In other words, $A=\{x+ad: a\in \tilde{A}\}$ for some $x,d>0$ and some $\tilde{A}\subseteq \{0,1,\dots,2k-4\}$ with $|\tilde{A}|=k$. Changing $x$ if needed, we can assume $0\in \tilde{A}$ without loss of generality. In particular, $$AA=\{x^2+(a+b)dx+abd^2: a,b\in \tilde{A}\},$$ which we can think of as the image of $B=\{(a+b,ab): a,b\in \tilde{A}\}$ under the map $f(m,n)=x^2+mdx+nd^2$. While $f$ need not be injective on $B$, it does have the property that $f(m',n')>f(m,n)$ if $m'\geq m$, $n'\geq n$, and $\max\{m'-m,n'-n\}>0$, independent of $x$ and $d$. Therefore, it suffices to find a chain of elements $(m_1,n_1),\dots,(m_{3k-3},n_{3k-3})\in B$ satisfying $m_{i+1}\geq m_i$, $n_{i+1}\geq n_i$, and $\max\{m_{i+1}-m_i,n_{i+1}-n_i\}>0$ for all $1\leq i \leq 3k-4$, as such a chain corresponds to a strictly increasing sequence of $3k-3$ elements of $AA$.

\noindent To complete the proof, such a chain must be found for each $4\leq k \leq 7$ and each $\tilde{A}\subseteq \{0,1,\dots,2k-4\}$ with $0\in \tilde{A}$ and $|\tilde{A}|=k$. The number of options for $\tilde{A}$ is ${2k-4 \choose k-1}$, which is $4,15,56,210$ for $k=4,5,6,7$, respectively. We display one $k=6$ example in Figure \ref{APPfigure} below, and we include diagrams of all $285$ cases here: \url{https://github.com/andrewlott99/SP-k-}.

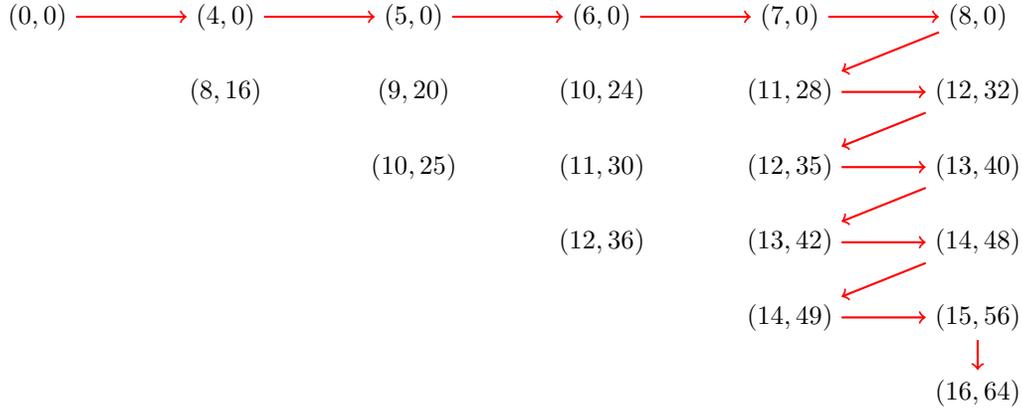
\begin{figure} [H] 
\begin{tikzpicture}

  \node (a) at (-7.5,10) {$(0,0)$};

  \node (b) at (-5,10) {$(4,0)$};

  \node (c) at (-2.5,10) {$(5,0)$};

  \node (d) at (0,10) {$(6,0)$};

  \node (e) at (2.5,10) {$(7,0)$};

  \node (f) at (5,10) {$(8,0)$};

  \node (g) at (-5,9) {$(8,16)$};

  \node (h) at (-2.5,9) {$(9,20)$};

  \node (i) at (0,9) {$(10,24)$};

  \node (j) at (2.5,9) {$(11,28)$};

  \node (k) at (5,9) {$(12,32)$};

  \node (l) at (-2.5,8) {$(10,25)$};

  \node (m) at (0,8) {$(11,30)$};

  \node (n) at (2.5,8) {$(12,35)$};

  \node (o) at (5,8) {$(13,40)$};

  \node (p) at (0,7) {$(12,36)$};

  \node (q) at (2.5,7) {$(13,42)$};

  \node (r) at (5,7) {$(14,48)$};

  \node (s) at (2.5,6) {$(14,49)$};

  \node (t) at (5,6) {$(15,56)$};

  \node (u) at (5,5) {$(16,64)$};

  \graph [edges={red, thick}] { (a) -> (b) -> (c) -> (d) -> (e) -> (f) -> (j) -> (k) -> (n) -> (o) -> (q) -> (r) -> (s) -> (t) -> (u)};

\end{tikzpicture}
\caption{The red path corresponds to a strictly increasing sequence of $15$ elements of $AA$ when $k=6$ and $\tilde{A}=\{0,4,5,6,7,8\}$. This is one of ${8 \choose 5}=56$ cases for $k=6$. In this and most other cases, longer qualifying paths exist, but we do not prioritize optimality in our analysis, only reaching $3k-3$.}
\label{APPfigure}
\end{figure}


\subsection{Theorem \ref{fo} applied on the product side}  \label{FIS} Another approach to our desired lower bound on $SP(k)$ is to assume $A\subseteq \N$ with $|A|=k$ and $|AA|\leq 3k-4$, then try to argue $|A+A|\geq 3k-3$. This begs the question as to whether an analog of Theorem \ref{fo} holds under the assumption of a small product set. In the introduction, we were able to immediately transfer the bounds \eqref{sp} from sumsets to products sets by viewing multiplication as addition of exponents. Things are not quite so simple here, because Freiman's proof of Theorem \ref{fo} is specific to $\Z$. Fortunately, this issue is well-trodden, and it is remarked repeatedly in the literature that Theorem \ref{fo} holds with $\Z$ replaced by any torsion-free abelian group (see the introductions of \cite{HLS} and \cite{BPS}, for example). For completeness, we include a proof of our desired special case, the meat of which can be found in Lemma 5.25 in \cite{TaoVu}.

\begin{corollary}\label{3k4tors} If $A\subseteq (0,\infty)$ with $|A|=k$ and $|AA|=2k-1+b\leq 3k-4$, then $A$ is contained in a geometric progression of length $k+b$.
\end{corollary}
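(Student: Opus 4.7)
The plan is to reduce Corollary \ref{3k4tors} to Theorem \ref{fo} via two changes of variable: first, passing from the multiplicative group $((0,\infty),\cdot)$ to the additive group $(\R,+)$ using the logarithm, and second, Freiman-isomorphically embedding the resulting finite set into $\Z$.

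First I would set $B = \{\log a : a \in A\} \subseteq \R$. Since $a \mapsto \log a$ is an injective group homomorphism $((0,\infty),\cdot) \to (\R,+)$, we have $|B| = k$ and $|B + B| = |AA| = 2k - 1 + b \leq 3k - 4$. Because exponentiation carries arithmetic progressions in $\R$ to geometric progressions in $(0,\infty)$ of the same length, it suffices to prove that $B$ is contained in an arithmetic progression of length $k + b$ in $\R$.

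Next I would exploit that the subgroup $G = \langle B \rangle$ of $(\R,+)$ is finitely generated and torsion-free, hence isomorphic to $\Z^d$ for some $d \geq 1$. Identifying $G$ with $\Z^d$, consider the linear map $\phi(n_1, \ldots, n_d) = \sum_{i=1}^d n_i N^{i-1}$ for a sufficiently large integer $N$. This is a group homomorphism $\Z^d \to \Z$ whose restriction to $B$ becomes a Freiman isomorphism of any prescribed order once $N$ is large enough in terms of $\max_{b \in B}\|b\|_\infty$; packaging this construction is the role of Lemma 5.25 of \cite{TaoVu}. Setting $B' := \phi(B) \subseteq \Z$, the linearity of $\phi$ gives $|B' + B'| = |B + B| \leq 3k - 4$, and Theorem \ref{fo} applied to $B'$ yields an arithmetic progression $P' \subseteq \Z$ of length $k+b$ containing $B'$.

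Finally I would lift this AP-containment back to $G$. The density of $B'$ inside $P'$ forced by the bound $|B' + B'| \leq 3k - 4$ supplies a system of integer-linear relations among elements of $B'$ that encode its position within the progression; these relations transfer to the corresponding elements of $B$ via the Freiman isomorphism $\phi$, producing an arithmetic progression of length $k+b$ in $G$ that contains $B$. Exponentiating yields the required geometric progression of length $k+b$ containing $A$. The hardest part is this final lifting step, which is why we prepare a Freiman isomorphism of sufficiently high order at the outset rather than settling for a mere $2$-isomorphism (which would preserve only $|B+B|$); the torsion-freeness afforded by $A \subseteq (0,\infty)$ is essential here, as it is precisely what allows the identification $G \cong \Z^d$.
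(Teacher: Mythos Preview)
Your approach is essentially the paper's: pass from $((0,\infty),\cdot)$ to a finitely generated torsion-free abelian group (hence $\Z^d$), Freiman-embed into $\Z$ via a base-$N$ map, apply Theorem~\ref{fo}, and pull back. The logarithm step is cosmetic; the paper works directly with the multiplicative subgroup generated by $A$.

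However, your final lifting step contains a misconception. You assert that a Freiman $2$-isomorphism ``would preserve only $|B+B|$'' and therefore reach for higher order, but this is false: a $2$-isomorphism preserves every relation of the form $a+b=c+d$, and an arithmetic progression is characterised precisely by such relations (namely $p_{i-1}+p_{i+1}=p_i+p_i$ for consecutive terms). Hence a $2$-isomorphism \emph{does} carry arithmetic progressions to arithmetic progressions of the same length and back, and this is exactly what the paper uses. Your appeal to unspecified ``integer-linear relations'' and to Freiman isomorphisms of arbitrarily high order is both unnecessary and, as written, not an argument.

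The genuine subtlety in the lifting---which you do not address---is that the progression $P'$ produced by Theorem~\ref{fo} lives in $\Z$, not a priori inside the image of the bounded box on which your map $\phi$ is a $2$-isomorphism; without this you cannot invert $\phi$ on $P'$. The paper handles this by first translating $\psi(A)$ into $[0,L]^j$ and taking the box to be $[0,2L)^j$: since the containing progression has length $k+b\leq 2k-3$ while $|\psi(A)|=k$, any such progression through $\psi(A)$ can be taken to lie in the enlarged box, and then the $2$-isomorphism transports the statement to $\Z$ and back. You should replace the vague higher-order argument with this concrete bookkeeping.
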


\begin{proof}[Proof] Suppose $A\subseteq (0,\infty)$ with $|A|=k$ and $|AA|=2k-1+b\leq 3k-4$, and let $G$ be the multiplicative subgroup of $(0,\infty)$ generated by $A$. Since $G$ is torsion-free ($x\neq 1 \implies x^n\neq 1$ for all $n\in \N$), we know by the classification of finitely generated abelian groups that $G$ is isomorphic to $(\Z^j,+)$ for some $j\leq k$, say via $\psi:G\to \Z^j$. In particular, $|AA|=|\psi(A)+\psi(A)|$, and geometric progressions in $G$ correspond to arithmetic progressions in $\Z^j$.

\noindent After possibly translating,  we can assume  $\psi(A)\subseteq B=[0,L]^j\cap \Z^j$ for some $L\in \N$. Then, letting $M=2L$, the map $\varphi(x_1,\dots,x_j)=x_1+x_2M+\cdots+x_jM^{j-1}$ is a bijection satisfying $$a+b=c+d\iff \varphi(a)+\varphi(b)=\varphi(c)+\varphi(d)$$ for all $a,b,c,d\in B$. Such a map is known as a \textit{Freiman $2$-isomorphism}, which preserves both sumset sizes and arithmetic progressions, as the latter are defined by a system of equations of the form $x+y=z+z$. Finally, Theorem \ref{fo} yields that $\varphi(\psi(A))$ is contained in an arithmetic progression $P\subseteq \varphi(B) \subseteq \Z$ of length at most $k+b$, and hence $A$ is contained in the geometric progression $\psi^{-1}(\varphi^{-1}(P))$.
\end{proof}

\noindent Armed with Corollary \ref{3k4tors}, we immediately get a best-case lower bound on $|A+A|$ from the fact that a geometric progression of positive integers is a Sidon set. This is something of a folklore fact that we do not claim as original, but we include a proof at a level of generality that we do not believe has previously appeared, first recalling the rational root theorem. 

\begin{lemma}[Rational root theorem] Suppose $p(x)=a_kx^k+\cdots+a_1x+a_0\in \Z[x]$, $a_k\neq 0$. If $p(m/n)=0$ with $m,n\in \Z$, $\gcd(m,n)=1$, then $m\mid a_0$ and $n\mid a_k$. 
\end{lemma}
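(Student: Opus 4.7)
The plan is a direct computation starting from the equation $p(m/n)=0$. First I would multiply through by $n^k$ to clear denominators, rewriting the hypothesis as the integer identity
$$a_k m^k + a_{k-1} m^{k-1} n + a_{k-2} m^{k-2} n^2 + \cdots + a_1 m n^{k-1} + a_0 n^k = 0.$$
This transforms the problem entirely into one about divisibility in $\Z$, which is the natural setting to exploit $\gcd(m,n)=1$.

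Next, to show $m \mid a_0$, I would isolate the unique term not visibly divisible by $m$, namely $a_0 n^k$, on one side:
$$a_0 n^k = -m\bigl(a_k m^{k-1} + a_{k-1}m^{k-2}n + \cdots + a_1 n^{k-1}\bigr).$$
The right-hand side is a multiple of $m$, so $m \mid a_0 n^k$. Since $\gcd(m,n)=1$ forces $\gcd(m,n^k)=1$ (by repeated application of Euclid's lemma, or equivalently by unique prime factorization), Euclid's lemma then gives $m \mid a_0$. The argument for $n \mid a_k$ is symmetric: isolate the term $a_k m^k$ instead, conclude $n \mid a_k m^k$, and invoke $\gcd(n,m^k)=1$.

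There is no real obstacle. The only step that requires a moment of care is the passage from $\gcd(m,n)=1$ to $\gcd(m,n^k)=1$, but this is a one-line consequence of the fact that any prime dividing $n^k$ must divide $n$. I would note in passing that the hypothesis $a_k \neq 0$ is what makes the second divisibility statement nontrivial, and that the result applies immediately to monic polynomials $p$ to conclude that any rational root is an integer divisor of $a_0$ — the form in which the lemma will presumably be used in the sequel.
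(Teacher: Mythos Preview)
Your proof is correct and is the standard argument for the rational root theorem. Note that the paper itself does not supply a proof of this lemma at all; it merely recalls the statement as a well-known tool before proving the subsequent lemma, so there is nothing to compare against.
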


\noindent In the following lemma, and the remainder of the paper, we extend the definition of geometric progression to $\R$, allowing negative starting points and common ratios, excluding $0$ for both.
 
\begin{lemma} \label{sg} If $A\subseteq \R$ is contained in a geometric progression with common ratio $r\in \Q$, $r\neq -2$, then $A$ is a Sidon set. In particular, if $|A|=k$, then $|A+A|=(k^2+k)/2$.
\end{lemma}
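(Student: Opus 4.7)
The plan is to assume for contradiction that $A$ has a nontrivial repeated sum, reduce this to an integer-coefficient polynomial equation satisfied by $r$, and then apply the rational root theorem.

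Without loss of generality the ambient geometric progression has ratio $r>1$, since reversing it replaces $r$ by $1/r$ without altering the underlying set. Every element of $A$ then takes the form $xr^a$ for a fixed $x>0$ and a non-negative integer $a$. Suppose $xr^{a_1}+xr^{a_2}=xr^{a_3}+xr^{a_4}$ with $\{a_1,a_2\}\neq\{a_3,a_4\}$ as multisets; relabeling so that $a_1\leq a_2$, $a_3\leq a_4$, and $a_1\leq a_3$, a quick check rules out $a_1=a_3$ (since then $r>1$ would force $a_2=a_4$), so $a_1<a_3$. Dividing by $xr^{a_1}$ and setting $q=a_2-a_1$, $s=a_3-a_1$, $t=a_4-a_1$ produces
\[
1+r^q=r^s+r^t,\qquad q\geq 0,\quad 1\leq s\leq t,\quad q\notin\{s,t\}.
\]

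I would then read this as $p(r)=0$ for some $p(x)\in\Z[x]$ and run a short case split on how $q$ compares with $s$ and $t$ and on whether $s=t$. The condition $q\notin\{s,t\}$ prevents the four exponents $0,q,s,t$ from coalescing beyond the single merge $r^s+r^t\to 2r^s$, so in every subcase the leading coefficient and constant term of $p(x)$ lie in $\{\pm 1,\pm 2\}$. The rational root theorem then forces any rational root to satisfy $r\in\{\pm 1,\pm 1/2,\pm 2\}$; together with $r>1$, only $r=2$ survives. But $r=2$ would give $1+2^q=2^s+2^t$, and comparing binary expansions under the constraints $1\leq s\leq t$ and $q\notin\{s,t\}$ rules this out: the left side has bits at positions $0$ and $q$ (or just position $1$ if $q=0$), while the right side has bits at positions $s,t$ or the single bit $s+1$, and $s\geq 1$ makes equality impossible. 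Hence $A$ has no nontrivial repeated sums, and $|A+A|=(k^2+k)/2$ follows from \eqref{sp}.

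The only non-automatic step is the bookkeeping in the case analysis, specifically confirming that the leading coefficient and constant term of $p(x)$ never exceed $2$ in absolute value across all subcases and that $r=2$ is eliminated in each. Both tasks are routine once the framework is in place, after which the rational root theorem closes the argument.
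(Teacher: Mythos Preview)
Your proposal is correct and follows essentially the same route as the paper: reduce a nontrivial collision $r^{a_1}+r^{a_2}=r^{a_3}+r^{a_4}$ to an integer polynomial equation in $r$, invoke the rational root theorem to cut the possibilities down to $r=2$, and then eliminate $r=2$ by hand. The only differences are cosmetic: the paper normalizes by dividing through by the globally smallest power (so the constant term is $\pm 1$ except in the single case $c=d$), whereas you divide by the smallest power on one side (so your constant term is $1$ or $2$); and the paper kills $r=2$ with the one-line inequality $2(2^{a-b}+1)-2\geq 2$, while you argue via binary expansions.
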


\begin{proof} Suppose $A\subseteq \{r^nx\}_{n\in \Z}$ with $r,x\neq 0$, $r\in \Q$. If $r=-1$, then $|A|\leq 2$ and the result is trivial. Otherwise, we assume $|r|>1$, and a nontrivial repeated sum of elements of $A$ corresponds to an equation of the form $r^ax+r^bx=r^cx+r^dx$ with $a\geq b$, $c\geq d$, $b>d$. This rearranges to $r^{b-d}(r^{a-b}+1)-r^{c-d}-1=0$. If $c>d$, the rational root theorem forbids all $r\in \Q$ with $|r|>1$, while if $c=d$, it only allows $r=\pm 2$. However, if $r=2$, the left side of the equation is at least $2(2)-2=2$, so this case is ruled out as well. In other words, if $r\neq -2$, no such nontrivial repeated sum exists. \end{proof}

\noindent As for cases not covered in Lemma \ref{sg}, the conclusion fails for $r=-2$, as seen by the solution $4-2=1+1$, holds for irrational $r$ with $|r|>2$ because of the rapid growth of the progression, and fails for certain irrational $r$ with $1<|r|<2$. As an example of the latter, letting $\varphi=(1+\sqrt{5})/2$, we see that $\{1,\varphi^2,\varphi^3\}$ is a three-term arithmetic progression, in other words $1+\varphi^3=\varphi^2+\varphi^2$. We now combine Corollary \ref{3k4tors} and Lemma \ref{sg} to conclude the section.

\begin{corollary} Suppose $A\subseteq \N$ with $|A|=k$. If $|AA|\leq 3k-4$, then $|A+A|=(k^2+k)/2$. In particular, $SP(k)\geq 3k-3$ for all $k\in \N$. 
\end{corollary}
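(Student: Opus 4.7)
The plan is to combine Corollary \ref{3k4tors} with Lemma \ref{sg} more or less directly: if the product set is small, then $A$ sits inside a short geometric progression; and a geometric progression of positive integers is Sidon, forcing $A+A$ to be as large as possible. The only thing that needs verification beyond citing the two prior results is that the geometric progression supplied by Corollary \ref{3k4tors} has rational common ratio, so that Lemma \ref{sg} is actually applicable.

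First I would suppose $A\subseteq \N$ with $|A|=k$ and $|AA|=2k-1+b\leq 3k-4$, and apply Corollary \ref{3k4tors} to embed $A$ in a geometric progression $\{xr^n : 0\leq n \leq k+b-1\}$ of length $k+b\leq 2k-3$. I would then argue that $r$ may be taken rational: since every element of $A$ is a positive integer, any ratio of two elements of $A$ lies in $\Q^+$, so the multiplicative subgroup generated by $A$ is a subgroup of $\Q^+$, which is torsion-free. Inspecting the proof of Corollary \ref{3k4tors}, the geometric progression produced is $\psi^{-1}(\varphi^{-1}(P))$ for an arithmetic progression $P\subseteq \Z$ and the isomorphism $\psi$ onto $\Z^j$ coming from this subgroup, so its terms lie in $\Q^+$ and its common ratio is rational. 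With this in hand, Lemma \ref{sg} immediately yields $|A+A|=(k^2+k)/2$, proving the first assertion.

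For the ``In particular'' statement, I would handle the tiny cases $k=1,2,3$ by hand, noting that $SP(1)=1\geq 0$, $SP(2)=3\geq 3$, and $SP(3)=6\geq 6$ as remarked after Theorem \ref{mainSP}. For $k\geq 4$, I would split on whether $|AA|\geq 3k-3$. In that case there is nothing to prove. Otherwise $|AA|\leq 3k-4$, the first part of the corollary applies, and $|A+A|=k(k+1)/2$; the elementary inequality $k(k+1)/2 \geq 3k-3$, equivalent to $(k-2)(k-3)\geq 0$, holds for all $k\geq 3$, so $\max\{|A+A|,|AA|\}\geq 3k-3$.

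The only potentially delicate point is the rationality of $r$, but it is forced automatically by $A\subseteq \N$; no new tools are needed beyond what is already set up, and the corollary is essentially a two-line synthesis of the preceding results.
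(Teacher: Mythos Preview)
Your proposal is correct and follows the same approach the paper intends: the corollary is simply the concatenation of Corollary~\ref{3k4tors} and Lemma~\ref{sg}, and the paper does not even write out a separate proof. Your extra care in verifying that the common ratio $r$ is rational (via $G\subseteq\Q^+$ when $A\subseteq\N$) and in handling the small-$k$ cases for the ``in particular'' clause is appropriate, and fills in details the paper leaves implicit.
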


\section{Proof of $SP(k)\geq 3k-2$ for $k\geq 8$} \label{3k3LB} For $k\geq 8$, establishing the inequality $SP(k)\geq 3k-2$ is equivalent to ruling out the existence of $A\subseteq \N$ with $|A|=k$ and $|A+A|,|AA|\leq 3k-3$. Theorem \ref{fo} is no longer sufficient, but Freiman established another classification that perfectly fits the bill. The following is a less precise version of Theorem 1.11 in \cite{Frei73}. 

\begin{theorem}[Freiman's $3k-3$ theorem] \label{3k3o} If $A\subseteq \Z$ with $|A|=k>6$ and $|A+A|\leq 3k-3$, then either $A$ is contained in an arithmetic progression of length $2k+1$, or $A$ is a union of two arithmetic progressions of the same step size. 
\end{theorem}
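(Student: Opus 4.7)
The plan is to induct on $k$, with the base case $k=7$ handled by a direct finite check, leveraging Theorem~\ref{fo} wherever possible. First, I would normalize $A$ by translating so $\min A = 0$ and dividing by $\gcd(A)$ so that $\gcd(A) = 1$; neither operation affects $|A|$ or $|A+A|$, and both conclusions (i) and (ii) are preserved under the inverse operations.

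Next, I would dispose of easy sub-cases. If $|A+A| \leq 3k-4$, Theorem~\ref{fo} places $A$ in an AP of length at most $2k-3$, which sits inside any extension to an AP of length $2k+1$, yielding conclusion (i). Hence we may assume $|A+A| = 3k-3$ exactly. Moreover, if $M := \max A \leq 2k$, then $A \subseteq \{0, 1, \dots, 2k\}$ is already inside a $(2k+1)$-term AP, again giving (i). The substantive case is therefore $M \geq 2k+1$, where the goal is to produce conclusion (ii).

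For this case, the idea is to remove a single element $a \in A$ to reach $A' = A \setminus \{a\}$ with $|A' + A'| \leq 3(k-1) - 4 = 3k - 7$, so that Theorem~\ref{fo} applies to $A'$. A natural extremal candidate is $a = M$, since the sums $M + a'$ for $a' \in A$ concentrate near the top of $A+A$ and many are likely to lack alternate representations. An averaging or extremal counting argument should produce some $a$ contributing at least $4$ new elements to $A+A$ relative to $A'+A'$, so Theorem~\ref{fo} places $A'$ inside an AP $P$ of length at most $2k-5$ with some common difference $d$. The decisive step is then to analyze how $a$ sits relative to $P$: either $a$ extends $P$ by a bounded amount (placing $A$ inside a $(2k+1)$-term AP, case (i)), or else $a \in d\Z + c$ for some residue $c$ already represented in $P$, with $a$ sufficiently separated from $P$, creating the second parallel AP demanded by (ii).

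The central obstacle is ruling out configurations where $a$ fails to lie on the lattice $d\Z + c$ with $c$ matching a residue in $P$: in such configurations one must show that $a + A'$ introduces too many new elements to $A+A$ to be compatible with the bound $3k-3$. This requires careful, quantitative tracking of overlaps between $a + A'$ and the structured sumset $P + P$, and a separate argument ruling out decompositions of $A$ into three or more parallel APs with common difference $d$. The base case $k=7$ and any edge cases where Theorem~\ref{fo} applies only marginally (e.g., when the element removed contributes exactly $4$ new sums) would also require dedicated attention.
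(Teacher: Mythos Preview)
The paper does not prove Theorem~\ref{3k3o}; it is quoted as a (less precise form of a) result from Freiman's monograph \cite{Frei73}, so there is no in-paper argument to compare against. That said, your sketch has a concrete gap worth flagging.

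The step ``an averaging or extremal counting argument should produce some $a$ contributing at least $4$ new elements to $A+A$'' is false in exactly the configurations you are trying to reach. Take $A=\{0,1,\dots,m-1\}\cup\{N,N+1,\dots,N+n-1\}$ with $m+n=k$ and $N$ large; then $|A+A|=3k-3$, and removing \emph{any} single element drops $|A+A|$ by at most $3$ (the four ``inner endpoints'' $0,\,m-1,\,N,\,N+n-1$ each account for exactly $3$ sums, and interior elements account for fewer). Hence $|A'+A'|=3(k-1)-3$, and Theorem~\ref{fo} does \emph{not} apply to $A'$. Since this example is precisely the conclusion~(ii) you want, any induction must feed $A'$ back into the $3k-3$ hypothesis rather than the $3k-4$ theorem.

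If you do that, the inductive hypothesis only tells you that $A'$ is either inside a $(2k-1)$-term AP or is itself a union of two APs with a common step $d'$. Reinserting $a$ then bifurcates further: in the second branch you must control how $a$ interacts with \emph{two} progressions (not one), rule out a third parallel AP, and handle the possibility that the step $d'$ for $A'$ differs from the step $d$ that works for $A$. None of this is impossible, and it is broadly the shape of Freiman's argument, but it is substantially more case-laden than your outline suggests, and the clean reduction to Theorem~\ref{fo} that you propose is not available.
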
 

\noindent Completely analogous to the deduction of Corollary \ref{3k4tors} from Theorem \ref{fo}, we can appeal to Freiman isomorphism and apply Theorem \ref{3k3o} on the product side. 

\begin{corollary} \label{3k3p} If $A\subseteq (0,\infty)$ with $|A|=k>6$ and $|AA|\leq 3k-3$,  then either \begin{enumerate}[(i)] \item $A$ is contained in a geometric progression of length $2k+1$, or  \item $A$ is a union of two geometric progressions with the same common ratio. \end{enumerate}
\end{corollary}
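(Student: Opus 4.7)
The plan is to mirror the proof of Corollary \ref{3k4tors} almost verbatim, with Theorem \ref{3k3o} playing the role of Theorem \ref{fo}. Let $G$ be the multiplicative subgroup of $(0,\infty)$ generated by $A$. Since $G$ is torsion-free, the classification of finitely generated abelian groups provides an isomorphism $\psi:G\to \Z^j$ for some $j\leq k$, and $|\psi(A)+\psi(A)|=|AA|\leq 3k-3$.

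After translating so that $\psi(A)\subseteq [0,L]^j$ for a suitably large $L\in \N$, I would compose with $\varphi(x_1,\dots,x_j)=x_1+Mx_2+\cdots+M^{j-1}x_j$, where $M=4L$, which restricts to a Freiman $2$-isomorphism from $B=[0,2L)^j\cap \Z^j$ onto its image in $\Z$. Since $\psi(A)+\psi(A)\subseteq B$, we have $|\varphi(\psi(A))+\varphi(\psi(A))|\leq 3k-3$, so Theorem \ref{3k3o} applies to $\varphi(\psi(A))\subseteq \Z$: either it is contained in a $(2k+1)$-term arithmetic progression, or it is a union of two arithmetic progressions of the same step size.

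Pulling back through $\varphi^{-1}\circ \psi^{-1}$ then finishes the job. The key observation is that both conclusions of Theorem \ref{3k3o} can be encoded by systems of equations of the form $x+y=z+w$---namely the relations $a_i+a_{i+2}=a_{i+1}+a_{i+1}$ within each progression, together with $p_{i+1}+q_j=p_i+q_{j+1}$ enforcing equality of the two step sizes---and such equations are preserved by Freiman $2$-isomorphisms and by the group isomorphism $\psi$. Hence case (i) transfers to containment of $A$ in a geometric progression of length $2k+1$, and case (ii) to a decomposition of $A$ as a union of two geometric progressions with the same common ratio. The one piece of bookkeeping I would want to verify carefully, which I do not expect to be a serious obstacle, is that the arithmetic progressions furnished by Theorem \ref{3k3o} lie inside $\varphi(B)$ so that $\varphi^{-1}$ acts on them as a Freiman $2$-isomorphism; this follows by choosing minimal enclosing progressions with endpoints in $\varphi(\psi(A))$ and enlarging $L$ if necessary to accommodate the length $2k+1$ (versus the length $2k-3$ used in the proof of Corollary \ref{3k4tors}).
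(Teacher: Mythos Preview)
Your proposal is correct and follows exactly the approach the paper intends: the paper does not spell out a proof of Corollary~\ref{3k3p} at all, but simply states that it is ``completely analogous to the deduction of Corollary~\ref{3k4tors} from Theorem~\ref{fo}'' via Freiman isomorphism, which is precisely what you do. Your identification of the one bookkeeping point---ensuring the progressions produced by Theorem~\ref{3k3o} can be pulled back through $\varphi^{-1}$---and your remedy (take minimal progressions with endpoints in $\varphi(\psi(A))$, enlarge $L$ to accommodate length $2k+1$) is in fact more careful than anything the paper makes explicit.
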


We know from Lemma \ref{sg} that if $A$ falls into case (i) of Corollary \ref{3k3p}, then $A$ is a Sidon set and $|A+A|=(k^2+k)/2$. Therefore, we focus the remainder of the section on case (ii), and the results to come are summarized as follows. 

\begin{theorem} \label{gpsum} If $A\subseteq (0,\infty)$ with $|A|=k$ is a union of two geometric progressions with the same common ratio $r>1$, then $$|A+A|\geq \begin{cases} (k^2-3k+8)/2 & k \text{ odd}, \ r\in \Q, \ r\neq 2 \\ (k^2-3k+10)/2 & k \text{ even}, \ r\in \Q, \ r\neq 2 \\ \left\lceil ((k+1)^2+3)/4\right\rceil & r\geq 2 \\ 22 & k=8, \ r\in \Q, \ r\geq 2\end{cases}.  $$
\end{theorem}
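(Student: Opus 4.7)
I would begin by writing $A = G_1 \sqcup G_2$ as a disjoint union of two geometric progressions both with common ratio $r > 1$, absorbing any intersection into one side. Set $m = |G_1|$, $n = |G_2|$ so that $m + n = k$, and assume without loss of generality $m \geq n$. If $n \leq 1$, then $A$ is contained in a geometric progression (possibly after adjoining one stray element), and Lemma \ref{sg} combined with a direct count gives $|A+A| \geq \binom{k+1}{2} - (k-1)$, which exceeds all four claimed bounds. So assume $n \geq 2$. Parametrize $G_1 = \{\alpha r^i : 0 \leq i < m\}$ and $G_2 = \{\beta r^j : 0 \leq j < n\}$ with $\beta/\alpha \notin r^{\mathbb{Z}}$.

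For cases 1 and 2 ($r \in \mathbb{Q}$, $r \neq 2$), Lemma \ref{sg} implies each $G_i$ is Sidon, so $|G_1 + G_1| = \binom{m+1}{2}$, $|G_2 + G_2| = \binom{n+1}{2}$, and $|G_1 + G_2| \leq mn$, the three summing to $\binom{k+1}{2}$. The claimed bounds amount to showing that the total number of collisions among these three pieces is at most $2k-4$ or $2k-5$ (depending on parity). A nontrivial cross-sum collision $\alpha r^i + \beta r^j = \alpha r^{i'} + \beta r^{j'}$ rearranges to $\alpha(r^i - r^{i'}) = \beta(r^{j'} - r^j)$; clearing denominators in $r = p/q$ reduces this to a polynomial identity that the rational root theorem (exactly as in Lemma \ref{sg}) eliminates unless $r = 2$. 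The same analysis bounds the pairwise intersections $(G_1+G_1)\cap(G_2+G_2)$, $(G_1+G_1)\cap(G_1+G_2)$, and $(G_2+G_2)\cap(G_1+G_2)$, and careful accounting closes the gap to the stated quadratic bounds.

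For case 3 ($r \geq 2$), Sidon-ness of each $G_i$ still holds by the remark following Lemma \ref{sg}, but cross-sum collisions can be plentiful (e.g., $1+6 = 4+3$ when $r=2, \beta/\alpha = 3$). I would instead use a window argument: sums in $G_i + G_i$ lie in intervals $[\alpha r^j, 2\alpha r^j]$ or $[\beta r^j, 2\beta r^j]$, whose interiors are disjoint across different $j$ because $r \geq 2$; counting sums per window and tracking the cross-sums $\alpha r^i + \beta r^j$ by their larger term yields, after minimizing over $(m,n)$ with $m+n = k$, the total $\lceil ((k+1)^2+3)/4 \rceil$. For the sharpened bound $22$ in case 4 ($k=8, r \in \mathbb{Q}, r \geq 2$, versus the general $r \geq 2$ bound $21$), the plan is enumeration of the partitions $(m, n) \in \{(7,1), (6,2), (5,3), (4,4)\}$, combined with the rationality of $r$ and $\beta/\alpha$ to rule out, via rational-root considerations, the specific extremal configurations that would realize $|A+A|=21$.

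The main obstacle is case 3: executing the window argument cleanly so as to produce exactly $\lceil ((k+1)^2 + 3)/4 \rceil$ distinct sums requires a careful interleaving of the cross-sums $\alpha r^i + \beta r^j$ with the pure sums, and the optimization over $(m, n)$ is delicate. The case 4 refinement will hinge on producing precise diophantine contradictions for each of the four partitions, which is where rationality of $r$ buys the extra unit compared to case 3.
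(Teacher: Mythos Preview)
Your plan for cases 1 and 2 contains a genuine error. You write that a cross-sum collision $\alpha r^i + \beta r^j = \alpha r^{i'} + \beta r^{j'}$ is ``eliminated'' by the rational root theorem when $r\in\Q$, $r\neq 2$, ``exactly as in Lemma~\ref{sg}''. This is false: Lemma~\ref{sg} treats sums within a \emph{single} progression, where the free parameter $z=\beta/\alpha$ is absent. With $z$ present, collisions do occur---for instance with $r=3$, $\alpha=1$, $\beta=4$ one has $1+12=9+4$; likewise $(G_1+G_1)\cap(G_2+G_2)$ need not be empty (take $r=3$, $\alpha=1$, $\beta=2$, so $1+3=2+2$). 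What the paper actually proves (Lemma~\ref{gf}) is not that such equations have \emph{no} solution, but that each of the three types
\[
r^a+1=(r^b+r^c)z,\qquad r^a+1=r^b+r^c z,\qquad r^a-1=(r^b-r^c)z
\]
has at most \emph{one} solution for fixed $z\notin r^{\Z}$; that one solution then spawns a single geometric family of at most $\min\{m-1,n\}$ (respectively $n-1$, $\min\{m-2,n-1\}$) coincidences. Feeding these bounds into inclusion--exclusion and taking the worst split $m=\lceil k/2\rceil$, $n=\lfloor k/2\rfloor$ yields the stated quadratics. Your ``careful accounting'' cannot begin until you have this uniqueness-up-to-scaling lemma, and its proof is genuinely more delicate than Lemma~\ref{sg}.

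For case 3 your window idea is plausible, but the paper's route sidesteps the $(m,n)$-optimisation you flag as the main obstacle. List all of $A$ in increasing order $a_1<\cdots<a_k$; among any three consecutive terms two lie in the same progression, so $a_m\geq r\,a_{m-2}\geq 2a_{m-2}$. Then the blocks $\{a_m+a_1,\dots,2a_m\}$ for $m=k,k-2,k-4,\dots$ (with one or two leftover small sums) are pairwise disjoint, and their sizes sum to exactly $\lceil((k+1)^2+3)/4\rceil$. Case 4 continues from this same ordered list rather than enumerating partitions: if two consecutive $a_{j-1},a_j$ ever lie in the same progression an extra sum drops out immediately, and in the strictly alternating case a short rational-root check on the forced relation $a_3+a_1=2a_2$ locates a twenty-second sum.
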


\noindent Note that, for our purposes, the condition $r\in \Q$ is not restrictive, as the common ratio in a geometric progression of integers is guaranteed to be rational. We make the distinction here because one of our proofs does not rely on the rationality of $r$. Crucially, for every $k\geq 8$ and every $r\in \Q$, $r>1$, at least one case in the conclusion of Theorem \ref{gpsum} guarantees $|A+A|\geq 3k-2$, yielding our desired result. 

\begin{corollary}\label{83k2} For all $k\geq 8$, we have $SP(k)\geq 3k-2$.
\end{corollary}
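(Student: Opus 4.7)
The plan is to assemble Corollary \ref{3k3p}, Lemma \ref{sg}, and Theorem \ref{gpsum} into a single case analysis. Supposing for contradiction that $A\subseteq \N$ has $|A|=k\geq 8$ with both $|A+A|\leq 3k-3$ and $|AA|\leq 3k-3$, I would first observe that since $k>6$, Corollary \ref{3k3p} places $A$ in one of two structural cases; and in either case, the hypothesis $A\subseteq \N$ forces the common ratio to be rational, which is what lets us invoke the rationality-dependent conclusions of Lemma \ref{sg} and Theorem \ref{gpsum}.

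In case (i), where $A$ sits inside a single geometric progression, Lemma \ref{sg} immediately gives $|A+A|=(k^2+k)/2$, and a one-line check shows this exceeds $3k-3$ for every $k\geq 4$ (the inequality $(k^2+k)/2\geq 3k-2$ reduces to $(k-1)(k-4)\geq 0$), which already contradicts $|A+A|\leq 3k-3$. In case (ii), where $A$ is a union of two geometric progressions with common ratio $r>1$, I would invoke Theorem \ref{gpsum} and check that at least one of its four lower bounds on $|A+A|$ reaches $3k-2$ on the appropriate range. For $r\neq 2$, the parity-dependent bounds $(k^2-3k+8)/2$ and $(k^2-3k+10)/2$ both dominate $3k-2$ once $k\geq 8$, after reducing to easy quadratic inequalities in $k$. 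For $r\geq 2$, the bound $\lceil((k+1)^2+3)/4\rceil$ handles all $k\geq 9$ by another elementary estimate.

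The main obstacle, and apparently the reason Theorem \ref{gpsum} has a fourth clause that looks almost ad hoc, is the single boundary case $k=8$ with $r\geq 2$: here the general bound $\lceil((k+1)^2+3)/4\rceil=21$ falls exactly one unit short of the target $22=3k-2$. This is precisely the gap closed by the fourth bound in Theorem \ref{gpsum}, which gives $|A+A|\geq 22$ under the hypotheses $k=8$, $r\in\Q$, $r\geq 2$. Combining these pieces covers every admissible $(k,r)$ with $k\geq 8$, so both subcases of Corollary \ref{3k3p} are incompatible with $|A+A|\leq 3k-3$, and the corollary follows.
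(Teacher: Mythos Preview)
Your proposal is correct and follows essentially the same approach as the paper: the paper deduces Corollary~\ref{83k2} by noting (just before Theorem~\ref{gpsum}) that case~(i) of Corollary~\ref{3k3p} yields a Sidon set via Lemma~\ref{sg}, and then observing that for every $k\geq 8$ and rational $r>1$ at least one clause of Theorem~\ref{gpsum} forces $|A+A|\geq 3k-2$. Your write-up simply makes the quadratic inequality checks explicit, including the boundary case $k=8$, $r\geq 2$ where the third clause gives only $21$ and the fourth clause is needed.
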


\noindent The $r\neq 2$ cases of Theorem \ref{gpsum} are covered in Section \ref{not2}, while the $r\geq 2$ cases are covered in Section \ref{geq2}.

\subsection{The $r\neq 2$ case} \label{not2}  In the proofs that follow, we make frequent use of the fact that if $A\subseteq \Z$ is finite with $A=B\cup C$, then $A+A=(B+B)\cup(C+C)\cup(B+C)$, and hence
\begin{equation} \label{iep} |A+A|\geq |B+B|+|C+C|+|B+C|-|(B+B)\cap(C+C)|-|(B+B)\cap(B+C)|-|(C+C)\cap(B+C)|\end{equation} by the inclusion-exclusion principle. In the case of interest, $B$ and $C$ are geometric progressions of the same common ratio $r>1$. The pairwise intersections in \eqref{iep} can certainly be nonempty, and elements in $B+C$ can certainly have multiple representations, pulling $|B+C|$ away from its maximum. However, if $r\in \Q$ and $r\neq 2$, then such coincidences are limited to a single ``geometric family" in each case. This fact, shown through repeated applications of the rational root theorem, is captured in the following lemma, which also allows for negative starting points and common ratios.






\begin{lemma} \label{gf} For fixed $r\in \Q$ with $1<|r|\neq 2$, and $z\in \R \setminus \{r^n: n\in \Z\}$ with $z\neq 0$, there is at most one solution $(a,b,c)$ to each of the  following equations under the given restrictions, with the listed exceptions: 
\begin{enumerate}[(i)] \item $r^a+1=(r^b+r^c)z$, \ $a,b,c\in \Z$, $a\geq 0$, $b\geq c$,  \ \text{except when} $r=-3$ \text{and} $z=-r^n$ \text{for some} $n\in \Z$, 
\item $r^a+1=r^b+r^cz$, \ $a,b,c\in \Z$, $a\geq 0$, \ \text{except when} $r=-3$ \text{and} $z=-5r^n/3$ \text{for some} $n\in \Z$, or 

\qquad \qquad \qquad \qquad \qquad \qquad \qquad \quad \  $r=-3/2$ and $z=14r^n/9$ for some $n\in \Z$,
\item $r^a-1=(r^b-r^c)z$, \ $a,b,c\in \Z$, $a> 0$. The conclusion also holds for (iii) when $r=2$, but not $r=-2$.
\end{enumerate}
\end{lemma}

\begin{proof} Suppose $r\in \Q$ with $|r|>1$, $|r|\neq 2$, $z\in \R \setminus \{r^n: n\in \Z\}$, and $z\neq 0$. 

\begin{enumerate}[(i)] \item Suppose there exists a solution of the form $r^a+1=(r^b+r^c)z$ with $a,b,c\in \Z$, $a\geq 0$, $b\geq c$. Replacing $r^cz$ with $z$ and $b-c$ with $b$ for convenience without loss of generality, we have $r^a+1=(r^b+1)z$ with $b\geq 0$. Since $z\neq 1$, $a$ and $b$ cannot both be $0$, so we assume without loss of generality that $a>0$, otherwise replacing $z$ with $1/z$. \\[5pt]
\noindent Now, suppose we have another solution $r^{a'}+1=(r^{b'}+r^{c'})z$ with $a',b',c'\in \Z$, $a'\geq 0$, $b'\geq c'$. Cross multiplying the two solutions and canceling $z$, this yields \begin{equation} \label{p1} r^{c'}(r^a+1)(r^{b'-c'}+1)=(r^b+1)(r^{a'}+1),\end{equation} noting that all exponents in \eqref{p1} are nonnegative with the possible exception of $c'$, while only $a$ is necessarily positive. \\[5pt]
\noindent \textbf{Case 1:} $c'\neq 0$. If $c'>0$, then the left side of $\eqref{p1}$ has constant term $0$ and leading term $2r^{a+b'}$ if $b'=c'$ and $r^{a+b'}$ otherwise. Meanwhile, the right hand side is either constant $4$, achieved only if $b=a'=0$, or it has leading and constant coefficient $1$ or $2$. In any case \eqref{p1} can be rearranged to an integral polynomial, set equal to $0$, with leading coefficient at most $2$ in absolute value, and constant coefficient the same as that of $(r^b+1)(r^{a'}+1)$. If $b=a'=0$, the rational root theorem allows for the possibility $r=\pm4$. However, substituting $r=\pm4$ into \eqref{p1} in this case yields  $((\pm 4^a)+1)((\pm 4^{b'-c'})+1)=\pm 4^{1-c'}$, which is impossible because, since $a>0$ and $b'-c'\geq 0$, the left side is an integer with an odd divisor greater than $1$. Otherwise, regardless of $b$ and $a'$, the only rational possibilities with $|r|>1$ are $r=\pm 2$, which are excluded by hypothesis. Similar reasoning applies to the case $c'<0$ by moving $r^{c'}$ to the other side of the equation, except that now it is the leading coefficient that could have absolute value as high as $4$, which does not provide any other possible rational $r$ with $|r|>1$. \\[5pt]
\noindent \textbf{Case 2:} $c'=0$. Here we have \begin{equation}\label{p12} (r^a+1)(r^{b'}+1)=(r^b+1)(r^{a'}+1), \end{equation} all exponents are nonnegative, and $a>0$. \\[5pt]
\noindent \textbf{Case 2a:} $b=a'=0$. In this case, \eqref{p12} rearranges to $r^{a+b'}+r^a+r^{b'}-3=0$. If $b'=0$, this collapses to $2(r^a-1)=0$, forcing $|r|=1$. If $b'>0$, the leading coefficient is $1$ and the constant term is $-3$, so the only possible rational roots with $|r|>1$ are $r=\pm 3$. When $r=3$, the left-hand side is clearly bigger than the right, but for $r=-3$, there is one solution with $a=b'=1$, in other words $(-3+1)(-3+1)=(1+1)(1+1)$. This accounts for the exception in item (i) in the lemma.    \\[5pt]
\noindent \textbf{Case 2b:} $b+a'>0$. If the two sides of \eqref{p12} are not identical (meaning equal as polynomials), then, after canceling common factors of $r$, we have an integral polynomial, set equal to $0$, with nonzero constant term at most $2$ in absolute value, ruling out rational $r$ with $1<|r|\neq 2$. The remaining case is the two sides of \eqref{p12} are identical, hence either $a=b$, which implies $z=1$ so is prohibited, or $a=a'$ and $b=b'$, in which case the two solutions to (i) are in fact the same. 

\

\item We make a similar substitution and suppose we have two solutions $r^a+1=r^b+z$ and $r^{a'}+1=r^{b'}+r^{c'}z$ with $a,b,a',b',c'\in \Z$, $a,a'\geq 0$. Note that $b,b'\neq 0$ since $z$ is not a power of $r$. Solving both equations for $z$ and setting them equal to each other yields \begin{equation}\label{p21} r^{c'}(r^a+1-r^b)=r^{a'}+1-r^{b'}. \end{equation} 

\noindent \textbf{Case 1:} $bb'<0$. We assume $b<0$ and $b'>0$, with the opposite case handled identically. Then, we have $r^{c'+b}(r^{a-b}+r^{-b}-1)=r^{a'}+1-r^{b'}$, with all exponents nonnegative except possibly $c'+b$. \\[5pt]
\noindent \textbf{Case 1a:} $c'+b\neq 0$. If $c'+b>0$, we have an integral polynomial, set equal to $0$, with constant term the same as that of $r^{a'}+1$, which is $1$ or $2$, ruling out rational $r$ with  $1<|r|\neq 2$. The case $c'+b<0$ is handled identically after dividing $r^{c'+b}$ to the other side. \\[5pt]
\noindent \textbf{Case 1b:} $c'+b=0$. Here we have $r^{a-b}+r^{-b}+r^{b'}-r^{a'}-2=0$, with all exponents positive except possibly $a'$. If $a'> 0$, then the constant term is $-2$, ruling out rational $r$ with $1<|r|\neq 2$. If $a'=0$, we have $r^{a-b}+r^{-b}+r^{b'}-3=0$, with all exponents positive. If $a=b+b'=0$, this collapses to $3(r^{b'}-1)=0$, forcing $|r|=1$. If there is a unique maximum amongst $a-b$, $-b$, and $b'$, then the leading coefficient is $1$ and the constant term is $-3$, so the only possible rational $r$ with $|r|>1$ are $r=\pm3$. If $r=3$, the left side is clearly bigger, but there is a solution with $r=-3$, namely $b'=-b=a=1$, which accounts for the first exception in item (ii). If instead two out of $a-b$, $-b$, and $b'$ are equal, while the third is smaller, we have an equation of the form $r^{j}(2r^{\ell}+1)=3$ with $j,\ell>0$. The left side is clearly bigger in absolute value for $r>1$ and $r=-3$, and all other rational $r$ with $|r|>1$ are ruled out except $r=-3/2$. However, there is a solution with $r=-3/2$, namely $j=\ell=1$, which corresponds to $a=0$, $b=-2$, and $b'=1$, and accounts for the second exception in item (ii). \\[5pt]
\noindent \textbf{Case 2:} $b,b'>0$. If $c'\neq 0$, we get an integral polynomial, set equal to $0$, with nonzero constant term at most $2$ in absolute value. If $c'=0$, we have $r^a+r^{b'}=r^b+r^{a'}$. Lemma \ref{sg} then implies either $a=b$, which yields $z=1$ so is prohibited, or $a=a'$ and $b=b'$, so the two solutions to (ii) are the same. \\[5pt]  
\noindent \textbf{Case 3:} $b,b'<0$. Here we have $r^{c'+b-b'}(r^{a-b}+r^{-b}-1)=r^{a'-b'}+r^{-b'}-1$, with all exponents positive except possibly $c'+b-b'$. If $c'+b-b'\neq 0$,  we get an integral polynomial, set equal to $0$, with constant term $1$, ruling out rational $r$ with $|r|>1$. If $c'+b-b'=0$, we have $r^{a-b}+r^{-b}=r^{a'-b'}+r^{-b'}$, which by Lemma \ref{sg} implies either $b=b'$ and $a=a'$, and hence $c'=0$, as desired, or $a-b=-b'$ and $-b=a'-b'$. In the latter case, we have $a=b-b'=-a'$, but since $a,a'\geq 0$ this implies $a=a'=0$, so $b=b'$ and $c'=b'-b=0$, completing the proof for (ii). 

\ 

\item In this item, we do not invoke $r\neq 2$. We again substitute and suppose  $r^a-1=(r^b-1)z$ and $r^{a'}-1=(r^{b'}-r^{c'})z$ with $a,b,a',b',c'\in \Z$, $a,a'>0$. Multiplying, and canceling $z$, gives $$(r^a-1)(r^{b'}-r^{c'})=(r^{a'}-1)(r^{b}-1).$$ Let $m=\max\{0,-b,-b',-c'\}$, and multiply both sides by $r^m$ to yield \begin{equation}\label{p3} (r^a-1)(r^{b'+m}-r^{c'+m})=(r^{a'}-1)(r^{b+m}-r^m),\end{equation} where now all exponents are nonnegative and $\min\{m,b+m,b'+m,c'+m\}=0$. If this minimum is uniquely attained, we get an integral polynomial, set equal to $0$, with constant term $1$, ruling out rational $r$ with $|r|>1$. Otherwise, it must be that exactly one of $b'+m,c'+m$ is $0$ and exactly one of $b+m,m$ is $0$. By symmetry, it suffices to consider the following two cases: \\[5pt]
\noindent \textbf{Case 1:} $c'=m=0$, $b,b'>0$. Returning to \eqref{p3}, we have \begin{equation} \label{p32}(r^a-1)(r^{b'}-1)=(r^{a'}-1)(r^{b}-1),\end{equation} with all exponents positive. If the two sides of \eqref{p32} are not identical, then either there is a unique minimum in $\{a,b,a',b'\}$, and the equation rearranges to an integral polynomial, set equal to $0$, with constant term $1$, or one of $a=b'$ or $b=a'$ are strictly less than the other pair of exponents. In the latter case, \eqref{p32} clearly does not hold for $r>1$, and further \eqref{p32} can be rearranged to an integral polynomial, set equal to $0$, with constant term $2$, so the only possibly rational $r$ with $|r|>1$ is $r=-2$, which is excluded by hypothesis. If the two sides of \eqref{p32} are identical, then either $a=b$, which implies $z=1$ so is prohibited, or $a=a'$ and $b=b'$, so the two solutions to (iii) are in fact the same. \\[5pt]
\noindent \textbf{Case 2:} $c'=b=-m$, $m,b'+m>0$. Returning to \eqref{p3}, we have \begin{equation} \label{p33} (r^a-1)(r^j-1)=(r^{a'}-1)(1-r^m), \end{equation} where $j=b'+m$, and all exponents are positive. This rearranges to an integral polynomial, set equal to $0$, with constant term $2$, so the only possible rational $r$ with $|r|>1$ are $r=\pm 2$. However, if $r>1$, then the left side of \eqref{p33} is positive while the right side is negative, so $r=2$ can be ruled out without invoking a hypothesis, which at long last completes the proof of the lemma.
 \end{enumerate} \end{proof} 
\noindent Parts (i) and (ii) of Lemma \ref{gf} fail when $r=2$, as seen by type (i) solutions $2^3+1=(2+1)3$ and $2+1=(2^{-1}+2^{-1})3$, and type (ii) solutions $2^2+1=2+3$ and $2+2=1+3$. Our application of Lemma \ref{gf} is quantitative, but we first state its consequences in a qualitative form that may be of independent interest.  

\begin{corollary}\label{intcor} Suppose $r\in \Q$ with $|r|>1$, $r\neq -2$, $x,y\neq 0$, $B=\{r^nx\}_{n\in \Z}$,   $C=\{r^ny\}_{n\in \Z}$, and $B\neq C$. 

\begin{enumerate}[(i)] \item For every $a\neq 0$, we have $R(a)=|\{(b,c)\in B\times C: b+c=a\}|\leq 2$. 
\item The set $\{a\in B+C: R(a)>1\}$ is either empty, $\{0\}$, or a geometric progression with common ratio $r$. 
\item If $r\notin \{2,-3/2,-3\}$, then $(B+B)\cap(C+C)$, $(B+B)\cap (B+C)$, and $(C+C)\cap (B+C)$ are each either empty or a geometric progression with common ratio $r$, with the elements in the latter two intersections uniquely represented as $b+c$ with $b\in B$, $c\in C$.
\end{enumerate}
\end{corollary}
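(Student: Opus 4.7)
The plan is to set $z = y/x$ and observe that the hypothesis $B \neq C$ is precisely the hypothesis $z \notin \{r^n : n \in \Z\}$ of Lemma \ref{gf}, after which every claim in the corollary reduces to a direct application of one of Lemma \ref{gf}(i)--(iii).

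First, for the bound $R(u) \leq 2$: each representation $u = r^{\alpha} x + r^{\beta} y$ translates to $u/x = r^{\alpha} + r^{\beta} z$, and equating two such representations and dividing through by the smaller power of $r$ on the $x$-side yields an identity of precisely the form of Lemma \ref{gf}(iii). Given three putatively distinct representations of a single $u$, I would pair the one with the largest $r^\alpha$-exponent against each of the other two, extract two form-(iii) identities, and invoke the uniqueness clause in Lemma \ref{gf}(iii) to collapse the other two pairs to each other, contradicting distinctness.

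Next, to show $\{u \in B+C : R(u) = 2\}$ is either empty or a two-way-infinite geometric progression with ratio $r$: if both $u$ and $v$ have $R = 2$, each produces a form-(iii) identity via its pair of representations; Lemma \ref{gf}(iii) forces the two identities to coincide, from which a short exponent calculation gives $v = r^k u$ for some integer $k$. Conversely, multiplying both summands in each representation of $u$ by $r$ yields a pair of representations of $ru$, so the set is closed under multiplication by $r$ and $r^{-1}$, hence equals $\{r^n u_0 : n \in \Z\}$ whenever nonempty. Notably, this part does not require $r \neq 2$, since Lemma \ref{gf}(iii) carries no such restriction.

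Finally, under the added hypothesis $r \neq 2$, I would handle the three intersection claims in parallel. An element of $(B+B) \cap (C+C)$ yields an equation that rearranges to the form $r^a + 1 = (r^b + r^c) z$ of Lemma \ref{gf}(i), while an element of $(B+B) \cap (B+C)$ or $(C+C) \cap (B+C)$ rearranges to the form $r^a + 1 = r^b + r^c z$ of Lemma \ref{gf}(ii). In each case, uniqueness of the exponent tuple combined with the same closure-under-multiplication-by-$r$ observation as above yields the doubly infinite geometric progression structure. For the uniqueness of the $B+C$ representation in the latter two intersections, any two distinct such representations combined with the $B+B$ (respectively $C+C$) representation would produce two different solutions to a single form-(ii) equation, which Lemma \ref{gf}(ii) forbids. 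The main challenge throughout is not any single step but rather the bookkeeping needed to normalize exponents correctly against the precise statements of Lemma \ref{gf}(i)--(iii); the substantive content already resides in that lemma.
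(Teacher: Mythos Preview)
Your proposal is correct and follows essentially the same approach as the paper: reduce each coincidence to an equation that, after scaling by an appropriate power of $r$, falls under Lemma~\ref{gf}(i), (ii), or (iii), and then use the uniqueness clause together with closure under multiplication by $r^{\pm 1}$ to obtain the geometric-progression structure. The paper compresses all of this into a single sentence (noting that one takes $z=y/x$ or $z=x/y$ as appropriate), while you have spelled out the bookkeeping; the content is the same.
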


\begin{proof} Each element in the three intersections of interest, and each repeated representation in $B+C$, yields an equation that uniquely scales by a power of $r$ to an equation treated in Lemma \ref{gf}, with $z=y/x$ or $z=x/y$ as appropriate. Since at most one such solution of each type exists, the result follows. The special case $a=0$ in items (i) and (ii) comes from the case $B=-C$. \end{proof} 

\noindent The following corollary captures our quantitative application of Lemma \ref{gf}. The $r\in \Q$, $r\neq 2$ cases in Theorem \ref{gpsum} then follow by taking $m=\lceil k/2 \rceil$ and $n=\lfloor k/2 \rfloor$, which is the worst-case scenario.  

\begin{corollary} \label{gp8} Suppose $x,y\neq 0$ and $r\in \Q$ with $|r|>1$, $r\notin \{2,-3/2,-2,-3\}$. If $A=B\cup C$ with $B=\{x,rx,\dots,r^{m-1}x\}$, $C=\{y,ry,\dots,r^{n-1}y\}$, $B\cap C=\emptyset$,  and $m\geq n>0$, then 
$$|A+A|\geq ((m+n)^2+m+n)/2-\min\{m-1-\alpha,n-1\}-2\min\{m-1,n\}-(n-1), $$ where $\alpha=0$ if $0\in B+C$ and $\alpha=1$ otherwise.
\end{corollary}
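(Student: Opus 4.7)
The plan is to apply the inclusion-exclusion principle \eqref{iep} to the decomposition $A+A = (B+B) \cup (C+C) \cup (B+C)$, and to bound each of the six resulting terms using Lemma \ref{sg} and Corollary \ref{intcor}. As a preliminary reduction, if $y/x = r^k$ for some $k \in \Z$, then $A$ lies in the single geometric progression $\{r^n x : n \in \Z\}$ and is a Sidon set by Lemma \ref{sg}, giving $|A+A| = ((m+n)^2+m+n)/2$, which exceeds the claimed bound. Henceforth I assume $y/x$ is not a power of $r$, placing us in the setting of Corollary \ref{intcor}. Applied to $B$ and $C$ individually, Lemma \ref{sg} gives $|B+B| = (m^2+m)/2$ and $|C+C| = (n^2+n)/2$, while Corollary \ref{intcor} implies each element of $B+C$ has at most two representations, so $|B+C| \geq mn - \ell$, where $\ell$ denotes the number of doubly-represented elements.

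The heart of the proof is bounding $\ell$ together with the three pairwise intersection sizes. By Corollary \ref{intcor}, each of these four quantities is either zero or the length of a geometric progression with common ratio $r$; each such progression is generated by scaling a unique ``base'' equation (uniqueness from Lemma \ref{gf}) by powers of $r$, and its length is constrained by how many such scalings keep all relevant indices inside $\{0,\ldots,m-1\}$ or $\{0,\ldots,n-1\}$. I plan to enumerate the minimal bases (those where the relevant indices attain minimum $0$) and, for each, use the non-power-of-$r$ condition on $y/x$ to exclude degenerate configurations. Concretely, I aim to establish
\[
\ell \leq \min\{m-2,n-1\}, \quad |(B+B)\cap(C+C)|,\ |(B+B)\cap(B+C)| \leq \min\{m-1,n\}, \quad |(C+C)\cap(B+C)| \leq n-1.
\]
For instance, the bound on $\ell$ comes from the base equation $x(r^{i_1}-r^{i_2}) = y(r^{j_2}-r^{j_1})$; taking $i_2 = j_1 = 0$ forces $y/x = (r^{i_1}-1)/(r^{j_2}-1)$, and the smallest $(i_1, j_2)$ avoiding $y/x \in \{r^k\}$ is $(2,1)$ (since $(1,1)$ gives $y=x$), producing a chain of length $\min\{m-2, n-1\}$. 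The remaining three bounds follow from analogous analyses of base equations of the forms $r^a+1 = z(r^b+1)$ (for $(B+B)\cap(C+C)$) and $r^a+1 = r^b + zr^c$ (for the other two intersections) furnished by Lemma \ref{gf}, with the extremal configurations for $(B+B)\cap(C+C)$ and $(B+B)\cap(B+C)$ being $y/x = (1+r)/2$ and $y/x = 2r-1$, respectively.

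Substituting the four chain-length bounds into the inclusion-exclusion inequality yields
\[
|A+A| \geq \tfrac{m^2+m}{2} + \tfrac{n^2+n}{2} + mn - \min\{m-2,n-1\} - 2\min\{m-1,n\} - (n-1),
\]
which rearranges to the claimed inequality. The main obstacle is the case analysis behind the four chain-length bounds: for each quantity, one must enumerate the possible minimal base equations, verify that the non-power-of-$r$ hypothesis excludes every base with all indices zero (those being the ones that would otherwise allow chains of length $\min\{m,n\}$), and confirm that the next-shortest admissible base produces precisely the claimed chain length. Each such case reduces to a short algebraic calculation of the same flavor as the proof of Lemma \ref{gf}.
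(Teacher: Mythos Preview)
Your proposal is correct and follows essentially the same approach as the paper's proof: both apply inclusion--exclusion \eqref{iep}, dispose of the case $y/x\in\{r^k:k\in\Z\}$ via Lemma~\ref{sg}, compute $|B+B|$ and $|C+C|$ by Lemma~\ref{sg}, and then bound the three pairwise intersections and the deficiency in $|B+C|$ by counting how many $r$-scalings of the unique ``base'' solution from Lemma~\ref{gf} remain within the index ranges $\{0,\dots,m-1\}$ and $\{0,\dots,n-1\}$. The only cosmetic difference is that you invoke Corollary~\ref{intcor} as an intermediate statement whereas the paper appeals to Lemma~\ref{gf} directly; also, your normalization ``$i_2=j_1=0$'' should more precisely be ``$\min$ of the four indices equals $0$,'' which is how the paper phrases it and is what is actually needed to extract the bounds $\min\{m-2,n-1\}$, $\min\{m-1,n\}$, and $n-1$.
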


\begin{proof} Suppose $x,y\neq 0$ and $r\in \Q$ with $|r|>1$, $|r|\neq 2$. Suppose $A=B\cup C$, where $B=\{x,rx,\dots,r^{m-1}x\}$, $C=\{y,ry,\dots,r^{n-1}y\}$, $B\cap C=\emptyset$, $0\notin B+C$, and $m\geq n>0$. We first note that \begin{equation}\label{BBnCC} |B+B|=(m^2+m)/2, \quad |C+C|=(n^2+n)/2 \end{equation} by Lemma \ref{sg}. That same lemma assures that if $y=r^jx$ for some $j\in \Z$, then $A$ is contained in a single geometric progression and hence $|A+A|=((m+n)^2+m+n)/2$, so we assume this not to be the case for the remainder of the proof. 

\noindent By Lemma \ref{gf}, the elements of $(B+B)\cap(C+C)$, if there are any, correspond to a single family of solutions $(r^{a_0+j}+r^{b_0+j})x=(r^{c_0+j}+r^{d_0+j})y$, $0\leq b_0+j \leq a_0+j <m$, $0\leq d_0+j \leq c_0+j <n$, $\min\{a_0,b_0,c_0,d_0\}=0$, stemming from the at most one solution of type (i) with $z=y/x$. Since $\max\{a_0,c_0\}\geq 1$, we see that \begin{equation}\label{BBCC} |(B+B)\cap(C+C)|\leq \min\{m-1,n\}.\end{equation} An example with $m=n=4$ is shown in the figure below:

\begin{figure} [H] 
\begin{tikzpicture}[dot/.style={circle, fill, inner sep=1pt}]

   \draw (0,1) node[dot] (1) {} node[below] {1};
  \draw (1,1) node[dot] (3) {} node[below] {3};
  \draw (2,1) node[dot] (9) {} node[below] {9};
  \draw (3,1) node[dot] (27) {} node[below] {27};
 
  \foreach \x in {0,1,2,3} {
    \draw (\x,1) node[dot] {};
    \ifnum\x=0
      \draw[->, bend left=80, purple] (\x,1) to (\x+1,1);
    \fi
    \ifnum\x=1
      \draw[->, bend left=80, orange] (\x,1) to (\x+1,1);
    \fi
    \ifnum\x=2
      \draw[->, bend left=80, yellow] (\x,1) to (\x+1,1);
    \fi
  }

  \draw (.5,-.5) node[dot] (2) {} node[below] {2};
  \draw (1.5,-.5) node[dot] (6) {} node[below] {6};
  \draw (2.5,-.5) node[dot] (18) {} node[below] {18};
  \draw (3.5,-.5) node[dot] (54) {} node[below] {54};

  \draw[->, purple, out=130, in=50, distance=1cm] (2) to (2);
  \draw[->, orange, out=130, in=50, distance=1cm] (6) to (6);
  \draw[->, yellow, out=130, in=50, distance=1cm] (18) to (18);

\end{tikzpicture}
\caption{An illustration of $(B+B)\cap(C+C)=\{4,12,36\}$, where $B=\{1,3,9,27\}$ and $C=\{2,6,18,54\}$. Each color shows a member of the single geometric family of solutions, stemming from the solution $3+1=2+2$.}
		\label {test1}	 
\end{figure}
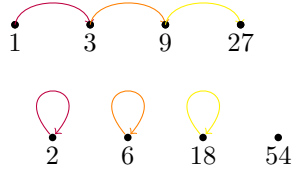
\noindent Similarly, potential elements of $(B+B)\cap(B+C)$ correspond to a family $(r^{a_0+j}+r^{b_0+j})x=r^{c_0+j}x+r^{d_0+j}y$ with  $0\leq b_0+j \leq a_0+j <m$, $0\leq c_0+j <m$, $0\leq d_0+j <n$, $\min\{a_0,b_0,c_0,d_0\}=0$, stemming from the at most one solution of type (ii) with $z=y/x$. Once again $\max\{a_0,c_0\}\geq 1$, and the number of solutions is at most $\min\{m-1,n\}$. The same reasoning applies to elements of $(C+C)\cap(B+C)$, now with $z=x/y$ and the terms $r^{a_0}y,r^{c_0}y$ with $\max\{a_0,c_0\}\geq 1$, so the number of solutions is at most $n-1$. To summarize, \begin{equation}\label{BBBC}|(B+B)\cap(B+C)|\leq \min\{m-1,n\}, \quad |(C+C)\cap(B+C)| \leq n-1. \end{equation}
See the figure below for another example when $m=n=4$.
\begin{figure} [H]
\begin{tikzpicture}[dot/.style={circle, fill, inner sep=1pt}]

   \draw (0,1) node[dot] (8) {} node[above] {8};
  \draw (1,1) node[dot] (12) {} node[above] {12};
  \draw (2,1) node[dot] (18) {} node[above] {18};
  \draw (3,1) node[dot] (27) {} node[above] {27};
 
  \foreach \x in {0,1,2,3} {
    \draw (\x,1) node[dot] {};
    \ifnum\x=0
      \draw[->, purple] (\x,1) to (\x+1.5,-.5);
    \fi
    \ifnum\x=1
      \draw[->, orange] (\x,1) to (\x+1.5,-.5);
    \fi
    \ifnum\x=2
      \draw[->, yellow] (\x,1) to (\x+1.5,-.5);
    \fi
  }

  \draw (1.5,-.5) node[dot] (16) {} node[below] {16};
  \draw (2.5,-.5) node[dot] (24) {} node[below] {24};
  \draw (3.5,-.5) node[dot] (36) {} node[below] {36};
  \draw (4.5,-.5) node[dot] (54) {} node[below] {54};

  \draw[->, purple, out=-130, in=-50, distance=1cm] (12) to (12);
  \draw[->, orange, out=-130, in=-50, distance=1cm] (18) to (18);
  \draw[->, yellow, out=-130, in=-50, distance=1cm] (27) to (27);

\end{tikzpicture}
\caption{An illustration of $(B+B)\cap(B+C)=\{24,36,54\}$, where $B=\{8,12,18,27\}$ and $C=\{16,24,36,54\}$. Each color shows a member of the single geometric family of solutions, stemming from the solution $12+12=8+16$.}
		\label {geofamily2} 	 
\end{figure}
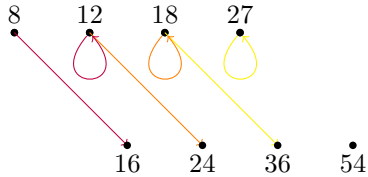
\noindent Finally, of the $mn$ pairs $(b,c)\in B+C$, the only loss of distinct sums comes from a geometric family $$r^{a_0+j}x+r^{d_0+j}y=r^{b_0+j}x+r^{c_0+j}y=r^jw,$$ where $w=r^{a_0}x+r^{d_0}y$, $0< b_0+j \leq a_0+j <m$, $0\leq d_0+j < c_0+j <n$, $\min\{a_0,b_0,c_0,d_0\}=0$, stemming from the at most one solution of type (iii) with $z=y/x$. If $w\neq 0$, then since $a_0,c_0\geq 1$ and $\max\{a_0,c_0\}\geq 2$, we have $p\leq \min\{m-2,n-1\}$ pairs of pairs, each collapsing to a different common sum, so $|B+C|$ loses $p$ from its maximum value. If instead $w=0$, we have $p\leq n$ pairs collapsing to the single sum $0$, so $|B+B|$ loses $p-1$ from its maximum value.  In other words, \begin{equation}\label{BPC} |B+C|\geq mn-\min\{m-1-\alpha,n-1\}, \end{equation} where $\alpha$ is as stated in the corollary. We illustrate the $w\neq 0$ case below with one final $m=n=4$ example.
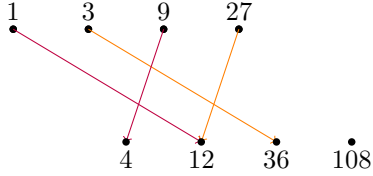
\begin{figure} [H]
\begin{tikzpicture}[dot/.style={circle, fill, inner sep=1pt}]

   \draw (0,1) node[dot] (1) {} node[above] {1};
  \draw (1,1) node[dot] (3) {} node[above] {3};
  \draw (2,1) node[dot] (9) {} node[above] {9};
  \draw (3,1) node[dot] (27) {} node[above] {27};
 
  \foreach \x in {0,1,2,3} {
    \draw (\x,1) node[dot] {};
    \ifnum\x=0
      \draw[->, purple] (\x,1) to (\x+2.5,-.5);
    \fi
    \ifnum\x=1
      \draw[->, orange] (\x,1) to (\x+2.5,-.5);
    \fi
     \ifnum\x=2
      \draw[->, purple] (\x,1) to (\x-.5,-.5);
    \fi
    \ifnum\x=3
      \draw[->, orange] (\x,1) to (\x-.5,-.5);
    \fi
  }

  \draw (1.5,-.5) node[dot] (4) {} node[below] {4};
  \draw (2.5,-.5) node[dot] (12) {} node[below] {12};
  \draw (3.5,-.5) node[dot] (36) {} node[below] {36};
  \draw (4.5,-.5) node[dot] (108) {} node[below] {108};

\end{tikzpicture}
\caption{An illustration showing that if $B=\{1,3,9,27\}$ and $C=\{4,12,36,108\}$, then the only elements of $B+C$ with two distinct representations are $13$ and $39$, stemming from the solution $9-1=12-4$. In particular, $|B+C|=(4)(4)-2=14$.}
		\label {geofamily3}	 
\end{figure} 
\noindent Substituting \eqref{BBnCC}, \eqref{BBCC}, \eqref{BBBC}, and \eqref{BPC} into \eqref{iep} yields the claimed lower bound on $|A+A|$. \end{proof}

\noindent As with Lemma \ref{gf}, the conclusion of Corollary \ref{gp8} fails when $r=2$, as seen by $B=\{1,2,4,8,16\}$, $C=\{3,6,12\}$.
  
\subsection{The $r\geq 2$ case} \label{geq2} The following lemma makes precise the fact that a union of two geometric progressions with the same common ratio $r\geq 2$ determines many distinct sums based only on the rapid growth in the terms. This is our one result in Section \ref{3k3LB} that does not rely on the common ratio being a rational number.

\begin{lemma} \label{2gplem1}
If $A\subseteq (0,\infty)$ with $|A|=k$ is a union of two geometric progressions with the same common ratio $r\geq 2$, then
$$|A+A|\geq \left\lceil \frac{(k+1)^2+3}{4}\right\rceil.$$
\end{lemma}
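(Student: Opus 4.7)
The plan is to proceed by induction on $k$, with trivial base case $k = 2$ (where $|A+A| = 3 = \lceil 12/4 \rceil$). The inductive step leverages two structural observations coming from $r \geq 2$. First, by pigeonhole on any three consecutive ordered elements of $A = B \cup C$, at least two lie in the same progression, so the sorted elements $a_1 < a_2 < \cdots < a_k$ satisfy $a_{i+2} \geq r \cdot a_i \geq 2 a_i$. Second, this doubling restricts coincidences: if $a_i + a_j = a_{i'} + a_{j'}$ with $i \leq j$, $i' \leq j'$, $(i,j) \neq (i',j')$, and WLOG $j \geq j' + 2$, then $a_i + a_j > a_j \geq 2 a_{j'} \geq a_{i'} + a_{j'}$, a contradiction; hence collisions occur only between ``adjacent columns'' $D_j := \{a_i + a_j : i \leq j\}$ and $D_{j+1}$, and in particular every sum has multiplicity at most two.

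From here I would remove the largest element $a_k$. Since $a_k$ is the top term of one of the two GPs, $A' := A \setminus \{a_k\}$ remains a union of two progressions (or a single progression of length $k-1$, which is a Sidon set for $r \geq 2$ by the argument in Lemma~\ref{sg}) with common ratio $r$, allowing the inductive hypothesis to deliver $|A' + A'| \geq f(k-1)$ with $f(k) := \lceil ((k+1)^2 + 3)/4 \rceil$. The $k$ sums involving $a_k$, namely $a_1 + a_k < a_2 + a_k < \cdots < 2 a_k$, are pairwise distinct, and the collision restriction forces each such sum that also lies in $A' + A'$ to arise from an equality $a_i + a_k = a_p + a_{k-1}$ with $i < p \leq k-1$, equivalently $a_p - a_i = \delta := a_k - a_{k-1}$. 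Calling $d_{k-1}$ the number of such $(i,p)$ pairs, we obtain $|A+A| \geq f(k-1) + (k - d_{k-1})$; since a short computation gives $f(k) - f(k-1) = \lceil k/2 \rceil$ for all $k \geq 3$, the induction closes provided $d_{k-1} \leq \lfloor k/2 \rfloor$.

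The main obstacle is this bound on $d_{k-1}$. When $a_{k-1}$ and $a_k$ lie in the same GP, $\delta = (r-1) a_{k-1} \geq a_{k-1}$, which forces $a_i = a_p - \delta \leq 0$ for any contributing $(i, p)$, so $d_{k-1} = 0$ trivially. In the complementary case (different GPs), the pairs counted by $d_{k-1}$ form edges of a graph on $\{1, \ldots, k-1\}$ whose connected components are arithmetic progressions in $A$ with common difference $\delta$. The doubling bound $a_{i+2} \geq 2 a_i$ implies that in any such AP of length at least three, three consecutive terms $a,\, a+\delta,\, a+2\delta$ force $a \leq 2\delta$, capping each AP at $4$ elements and each path at $3$ edges. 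Moreover, the algebraic identity $r^t - 2 r^s + 1 = 0$ has no solution with $r \geq 2$ and integers $s, t \geq 1$ (since $r^t \geq r \cdot r^s \geq 2 r^s > 2 r^s - 1$), which rules out three consecutive AP terms lying in a single GP. Combining these length and colouring constraints with careful enumeration, matching each path to its feasible $B/C$ split, should yield the bound $d_{k-1} \leq \lfloor k/2 \rfloor$ and complete the induction.
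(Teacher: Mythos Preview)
Your setup is sound: the pigeonhole gives $a_{m}\geq 2a_{m-2}$, and your ``adjacent-column'' collision restriction ($D_j\cap D_{j'}=\emptyset$ whenever $|j-j'|\geq 2$) is correct. The gap is in Case~2 of the inductive step. You need $d_{k-1}\leq \lfloor k/2\rfloor$, but the two structural facts you cite do not deliver this. The bound ``each $\delta$-path has at most four vertices'' only yields $d_{k-1}\leq \tfrac{3}{4}(k-1)$, which exceeds $\lfloor k/2\rfloor$ for $k\geq 5$; and the ``no three consecutive AP terms in one progression'' constraint restricts colour patterns along a path but does not obviously reduce the global edge count. You acknowledge this yourself with ``should yield the bound'' --- the careful enumeration is not carried out, and it is not clear that it can be without substantial further work (one would need, e.g., to show there is at most one long path, or to exploit the specific value $\delta=a_k-a_{k-1}$ much more heavily).

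The paper's proof avoids this entirely by exploiting your own collision observation more directly. Since $D_j\cap D_{j'}=\emptyset$ for $|j-j'|\geq 2$, the sets $D_k, D_{k-2}, D_{k-4},\ldots$ (together with $\{2a_1\}$ or $\{2a_1,a_1+a_2\}$ at the bottom, depending on parity) are pairwise disjoint, and summing $|D_j|=j$ over even or odd $j$ gives exactly $\lceil ((k+1)^2+3)/4\rceil$. No induction, no bound on $d_{k-1}$, no case analysis on $\delta$-paths. You had the key inequality $a_m\geq 2a_{m-2}$ in hand; skipping alternate columns rather than peeling one at a time is the move that closes the argument in two lines.
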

\begin{proof}
Suppose $A\subseteq (0,\infty)$ with $|A|=k$ is a union of two geometric progressions with the same common ratio $r\geq 2$. Write the elements of $A$ in increasing order $a_1<a_2<\cdots<a_{k}.$ Given any three consecutive elements $a_{m-2}, a_{m-1}, a_m,$ at least two must come from the same geometric progression, hence $$a_m\geq ra_{m-2}\geq 2a_{m-2}$$ for all $2\leq m \leq k$. Therefore, if $k$ is even, then
$$\{a_{k}+a_1, a_{k}+a_2, \dots, 2a_{k}\}, \{a_{k-2}+a_1, a_{k-2}+a_2, \dots, 2a_{k-2}\}, \dots, \{a_2+a_1,2a_2\}, \{2a_1\} $$
are pairwise disjoint sets of distinct sums, and if $k$ is odd, then
$$ \{a_{k}+a_1, a_{k}+a_2, \dots, 2a_{k}\},\{a_{k-2}+a_1, a_{k-2}+a_2, \dots, 2a_{k-2}\}, \dots, \{a_3+a_1,a_3+a_2,2a_3\},\{a_1+a_2,2a_1\}$$
are pairwise disjoint sets of distinct sums. The total number of distinct sums listed is $$1+2\sum_{j=1}^{k/2}j=1+\frac{k(k+2)}{4}=\frac{(k+1)^3+3}{4}$$ when $k$ is even and $$1+\frac{k+1}{2}+2\sum_{j=1}^{(k-1)/2}j=1+\frac{k+1}{2}+\frac{(k-1)(k+1)}{4}=\frac{(k+1)^2}{4}+1$$ when $k$ is odd, yielding the claimed lower bound in both cases.
\end{proof}

\noindent The right-hand side of Lemma \ref{2gplem1} exceeds $3k-2$ for $k\geq 9$, but is only $21$ when $k=8$. Fortunately, the additional assumption that the common ratio is a rational number is enough to get us across the goalline. 

\begin{lemma}

If $A\subseteq (0,\infty)$ with $|A|=8$ is a union of two geometric progressions with the same rational common ratio $r\geq 2$, then $|A+A|\geq 22$.

\end{lemma}
\begin{proof}
Suppose $A\subseteq (0,\infty)$ with $|A|=8$ and $A=B\cup C$, where $B$ and $C$ are geometric progressions of the same common ratio $r\in \Q$, $r\geq 2$. Write the elements of $A$ in increasing order $a_1<a_2<\cdots<a_{8}.$
We are guaranteed the $21$ distinct sums listed in the proof of Lemma \ref{2gplem1}, the set of which we call $S$, so we must identify one element of $(A+A)\setminus S$.

\noindent
If there is some $j\in\{2,\dots, 7\}$ such that $a_j, a_{j-1}$ are both in $B$ or both in $C$, then $a_{j}+a_1\notin S$ if  $j$ is even and $2a_j\notin S$ if $j$ is odd. Otherwise, we assume without loss of generality that  $a_2,a_4,a_6,a_8\in B$, while $a_3,a_5,a_7\in C$. If $a_3+a_1\neq2a_2$, then $a_3+a_1\notin S$, as needed.
Otherwise, $a_3+a_1=2a_2$, and we consider the cases $a_1\in B$ and $a_1\in C$.

\noindent
If $a_1\in B$, then $$(a_1,a_2,a_3,a_4,a_5,a_6,a_7,a_8)=(x,rx,y,r^2x,ry,r^3x,r^2y,r^4x)$$ with $x,y>0$, 
and $a_3+a_1=2a_2$ gives $y=(2r-1)x.$ Substituting $y=(2r-1)x$ and comparing $a_7+a_1$ to each element of $S$ that it could potentially equal, namely $a_6+a_2,a_6+a_3,\dots, 2a_6, $ we see that any match yields a polynomial equation $p(r)=0$ with no rational solution $r\geq 2$. Thus, $a_7+a_1\notin S$, as needed.

\noindent
If $a_1\in C$, then $$(a_1,a_2,a_3,a_4,a_5,a_6,a_7,a_8)=(y,x,ry,rx,r^2y,r^2x,r^3y,r^3x)$$ with $x,y>0$,
and $a_3+a_1=2a_2$ gives $x=(y+ry)/2$. The only element of $S$ that could possibly equal $a_3+a_2$ is $a_4+a_1$. However, assuming  $a_3+a_2= a_4+a_1$ and substituting $x=(y+ry)\slash 2$ yields $(r-1)^2=0$, contradicting $r\geq 2$. \end{proof}

\section{Concluding remarks} All cases in the conclusion of Theorem \ref{gpsum} have now been established, which in turn completes the proofs of Corollary \ref{83k2} and our main result Theorem \ref{mainSP}. From here, a natural question is whether our methods can be modified or supplemented to determine $SP(10)$, or beyond. At the moment, this feels out of reach. Based on both hand and computer searches, our best guess is that $SP(10)=30$, as achieved by $A=\{1,2,3,4,6,8,9,12,16,18\}$, which has $|A+A|=30$ and $|AA|=29$. However, there is not a characterization of the precision of Theorems \ref{fo} and \ref{3k3o} that can be applied to a set $A\subseteq \N$ with $|A|=10$ and $|A+A|$ or $|AA|$ equal to $29$. Some structural results beyond $3k-3$ have been established, for example Jin \cite{Jin07} up to $3k-3+\epsilon k$ for some small $\epsilon>0$, and Eberhard, Green, and Manners \cite{EGM} up to $(4-\epsilon)k$ for every $\epsilon>0$. However, these results require $k$ to be sufficiently large, so they are, a priori, not useful for our purposes. Development of new tools, or novel application of existing tools, to determine $SP(k)$ for some values of $k>9$ could be an interesting endeavor for future work.

\noindent \textbf{Acknowledgements:} This research was initiated during the Summer 2023 Kinnaird Institute Research Experience at Millsaps College. All authors were supported during the summer by the Kinnaird Endowment, gifted to the Millsaps College Department of Mathematics. At the time of submission, all authors except Alex Rice and Andrew Lott were Millsaps College undergraduate students.

\end{document}